\numberwithin{equation}{section}
\newtheorem{thm}{Theorem}[section]
\newtheorem{cor}[thm]{Corollary}
\newtheorem{lem}[thm]{Lemma}
\theoremstyle{definition}
\newtheorem{defn}[thm]{Definition}
\newtheorem{exm}[thm]{Example}
\newtheorem{rem}[thm]{Remark}
\newtheorem{conj}{Conjecture}
\DeclareMathOperator{\NN}{\mathbb {N}}
\DeclareMathOperator{\ZZ}{\mathbb {Z}}
\DeclareMathOperator{\lk}{lk}
\DeclareMathOperator{\supp}{supp}
\DeclareMathOperator{\Inter}{Inter}
\DeclareMathOperator{\girth}{girth}
\DeclareMathOperator{\reg}{reg}
\def\ord{\operatorname{ord}}
\def\D {\Delta}
\def\a {\mathbf a}
\def\b {\mathbf b}
\def\m {\mathfrak m}
\def\F {\mathfrak F}
\def\h {\widetilde{H}}
\begin{document}

\title{Regularity of powers of Stanley-Reisner ideals of one-dimensional simplicial complexes}

\author{Nguyen Cong Minh}
\address{Department of Mathematics, Hanoi National University of Education, 136 Xuan Thuy, Hanoi,	Vietnam}
\email{minhnc@hnue.edu.vn}
\author{Thanh Vu}
%\address{School of Applied Mathematics and Informatics, Hanoi University of Sience and Technology, Vietnam}
\email{vuqthanh@gmail.com}

\subjclass[2010]{13D02, 13D05, 13H99}
\keywords{Symbolic powers; ordinary powers; edge ideals; regularity}

%\subjclass{13D45, 05C90, 05E40, 05E45.}
%\keywords{Matroid, arboricity, Stanley-Reisner ideal, regularity.}
\date{}

\dedicatory{Dedicated to Professor Nguyen Tu Cuong on the occasion of his 70th birthday}
\commby{}
%-----------------------------------------------------------
% -----------------------------------------------------------
\maketitle
% -----------------------------------------------------------
\begin{abstract}
Let $\Delta$ be a one-dimensional simplicial complex. Let $I_\Delta$ be the Stanley-Reisner ideal of $\Delta$. We prove that for all $s \ge 1$ and all intermediate ideals $J$ generated by $I_\Delta^s$ and some minimal generators of $I_\Delta^{(s)}$, we have
$$\reg J = \reg I_\Delta^s = \reg I_\Delta^{(s)}.$$
\end{abstract}

\maketitle

\section{Introduction}\label{sect_intro}
Let $S = K[x_1, ..., x_n]$ be a polynomial ring over a field $K$. Let $\Delta$ be a simplicial complex on $[n]$ that may contain isolated vertices. Via the Stanley-Reisner correspondence, simplicial complexes on $[n]$ are in one-to-one correspondence with squarefree monomial ideals in $S$. For each subset $F$ of $[n]$, let $x_F=\prod_{i\in F}x_i$. The Stanley-Reisner ideal of $\Delta$ is defined by
$$I_\Delta = (x_F \mid  F \notin \Delta).$$

By \cite{CHT,K}, the function $s \mapsto \reg I_\Delta^s$ is asymptotically linear. Nonetheless, for a given simplicial complex $\Delta$, studying the sequence $\{\reg I_\Delta^s| s \ge 1\}$ is a challenging problem. When $\dim \Delta = 1$, Hoa and Trung \cite{HTr} computed $\reg I_\Delta^{(s)}$ for all $s$, while Lu \cite{L} computed the geometric regularity of $I_\Delta^s$. Note that $a_0(I_\Delta^{(s)}) = -\infty$, while Lu did not compute $a_0(I_\Delta^s)$ (see Subsection \ref{subsection_reg} for the definition of the $a_i$-invariants). In this paper, we compute the regularity of all intermediate ideals lying between $I_\Delta^s$ and $I_\Delta^{(s)}$ extending work of Hoa and Trung \cite{HTr} and Lu \cite{L}. More precisely, for monomial ideals $I \subseteq J$, we define $\Inter(I,J)$ the set of monomial ideals $L$ such that $L = I + (f_1, \ldots, f_t)$ where $f_i$ are among minimal monomial generators of $J$. Our main result is:

\begin{thm}\label{dim1} Let $\Delta$ be a one-dimensional simplicial complex. Let $I = I_\Delta$ be the Stanley-Reisner ideal of $\Delta$. Then for all $s \ge 2$ and all intermediate ideal $J \in \Inter(I_\Delta^s,I_\Delta^{(s)})$, we have
$$\reg (J)=\begin{cases}
3s &\text{ if } \girth \Delta = 3,\\ 
2s + 1 &\text{ if } \girth \Delta = 4,\\
2s & \text{ if } \girth \Delta \ge 5.
\end{cases}$$ 
\end{thm}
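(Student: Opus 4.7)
The plan is to bracket $\reg J$ between matching upper and lower bounds by processing the minimal generators of $I^{(s)}$ that lie outside $I^s$ one at a time. Let $\{f_1,\ldots,f_k\}$ be these extra minimal generators; every $J \in \Inter(I^s, I^{(s)})$ has the form $J_T = I^s + (f_i : i \in T)$ for some $T \subseteq \{1,\ldots,k\}$, so it suffices to show that $\reg J_T$ is constant in $T$. Since the shape of the $f_j$ and of the minimal generators of $I_\Delta$ itself depends essentially on the girth — cubic generators $x_ix_jx_k$ only appear when $\girth \Delta = 3$, and a Koszul-type $+1$ contribution only appears when $\girth \Delta = 4$ — the three regimes must be handled separately.

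For the upper bound I would induct on $|T|$ through the short exact sequence
\[
0 \longrightarrow \frac{S}{J_T : f_j}(-\deg f_j) \longrightarrow \frac{S}{J_T} \longrightarrow \frac{S}{J_T + (f_j)} \longrightarrow 0,
\]
which yields $\reg(J_T + (f_j)) \le \max\{\reg J_T,\, \reg(J_T : f_j) + \deg f_j\}$. The technical heart then becomes: uniformly in $T$ and $j$, bound $\reg(J_T : f_j) + \deg f_j$ by the target value. I would do this by identifying $(J_T : f_j)$ explicitly as an intermediate ideal $\Inter(I_{\Delta'}^{s'}, I_{\Delta'}^{(s')})$ for a suitable restricted subcomplex $\Delta' \subseteq \Delta$ and some $s' < s$ read off from the multidegree of $f_j$, thereby reducing the bound to an outer induction on $s$. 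Hoa--Trung's formula \cite{HTr} for $\reg I_\Delta^{(s)}$ supplies both the endpoint $J = I^{(s)}$ and the comparison data needed to start the induction, while Lu's geometric regularity computation \cite{L} gives the analogous information for $I_\Delta^s$.

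For the lower bound the same value is already realized by $\reg I_\Delta^{(s)}$ from \cite{HTr}, so it remains to show $\reg J$ is at least this value for every intermediate $J$. In the girth $3$ case, $(x_ix_jx_k)^s$ attached to a triangle of the edge graph is a minimal generator of $I_\Delta^s$ and, after one checks that no smaller generator of $I_\Delta^{(s)}$ divides it, a minimal generator of every $J$, forcing $\reg J \ge 3s$. In the girth $\ge 5$ case, any product of $s$ minimal quadratic generators with pairwise disjoint supports is a minimal generator of $I_\Delta^s$ of degree $2s$, giving $\reg J \ge 2s$. The girth $4$ case is subtler because the regularity exceeds the maximum minimal-generator degree by one; I would obtain the extra $+1$ from a linear first syzygy supported on a minimal induced $4$-cycle, adapting the $s=1$ Koszul relation for the complete intersection $(x_ix_k, x_jx_\ell)$ coming from the two non-edges of that $4$-cycle.

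The main obstacle is the colon analysis, since $J_T : f_j$ genuinely depends on $T$ rather than only on $f_j$ and $\Delta$. The key structural claim I would prove is that $(J_T : f_j)$ always sits inside the $\Inter$ family for an appropriate restricted datum $(\Delta', s')$ with $s' < s$, so that the inductive hypothesis on $s$ applies directly to the colon. Verifying this reduction — in particular, tracking how $\girth \Delta'$ relates to $\girth \Delta$ so that the arithmetic of $3s'$ versus $2s'+1$ versus $2s'$ comes out correctly — is where most of the combinatorial work lies, and the $\girth \Delta = 4$ case will require the most care because the $+1$ shift must be inherited consistently by every intermediate colon.
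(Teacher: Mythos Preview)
Your plan takes a genuinely different route from the paper. The paper never uses short exact sequences or colon-ideal induction on $|T|$; instead it works directly with Takayama's degree complexes $\Delta_\a(J)$. Its key reduction (Lemma~\ref{intermediate_reduction}) is that whenever $|\a|\ge 2s-1$ one has $\Delta_\a(J)=\Delta_\a(I^s)$ for \emph{every} intermediate $J$, so the dependence on the subset $T$ disappears at the level of degree complexes in the relevant range, and the remaining work is a case analysis of $\sqrt{I^s:x^\a}$ via an explicit membership criterion (Lemma~\ref{criterion_in_power}). The lower bounds come from restriction to an induced matching (Corollary~\ref{reg_lower_bound}), not from exhibiting specific minimal generators.

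Your plan, as written, has a genuine gap. The entire upper-bound argument rests on the claim that $(J_T:f_j)$ lies in $\Inter(I_{\Delta'}^{s'},I_{\Delta'}^{(s')})$ for some subcomplex $\Delta'\subseteq\Delta$ and $s'<s$, and you offer no evidence for this. Already for $T=\emptyset$ there is no reason to expect $I^s:f_j$ to be a power (or intermediate ideal) of the Stanley--Reisner ideal of a subcomplex: for edge ideals, Banerjee's analysis of $I^{s+1}:e$ for an \emph{edge} $e$ produces edge ideals of \emph{enlarged} graphs (with even-connected edges adjoined), not restrictions, and here $f_j$ is not a product of edges but a genuinely symbolic generator. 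Worse, $(J_T:f_j)$ depends on which other $f_i$ are in $T$, and you give no mechanism for why the resulting family of colon ideals should remain inside any $\Inter$ family. Without this, the induction on $s$ does not start and you have no handle on $\reg(J_T:f_j)+\deg f_j$. You correctly identify this as ``where most of the combinatorial work lies,'' but the proposal contains no indication of how to do that work, and the claimed structure is likely false as stated.

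A minor secondary point: your lower bound for $\girth\Delta\ge 5$ invokes ``$s$ minimal quadratic generators with pairwise disjoint supports,'' which need not exist (the induced matching number of $G$ can be $1$ here). The correct argument is restriction to a single edge, which already gives $\reg J\ge 2s$; the paper's Corollary~\ref{reg_lower_bound} does exactly this.
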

 %=  (x_1x_3,x_1x_5,x_1x_6,x_2x_4,x_2x_5,x_3x_5,x_3x_6,x_4x_6)$
\begin{exm} Let $\Gamma$ be a one-dimensional simplicial complex whose facets are 
$$\{1,2\}, \{2,3\}, \{3,4\}, \{1,4\}, \{4,5\}, \{5,6\}, \{2,6\}.$$
Let $I = I_\Gamma$. Note that $f_1 = x_2x_3x_4x_5x_6$, $f_2 = x_1x_2x_3x_4x_6$, $f_3 = x_1x_2x_3x_4x_5$ are minimal generators of $I^{(3)}$. Then $I^3, I^3 + (f_1), I^3 + (f_1,f_2), I^3 + (f_1,f_2,f_3)$ all have regularity $7$.
\end{exm}
We noticed the rigidity of regularity for intermediate ideals $J \in \Inter(I^s,I^{(s)})$ for $s = 2,3$ (see \cite{MV} for more detail) from the proofs in our previous work with Nam, Phong, and Thuy \cite{MNPTV}. Prior to this, we did not aware of any rigidity result for the regularity of ideals in non-trivial sequences. We conjecture that this rigidity property holds for edge ideals of graphs.

\begin{conj}\label{conj_rigid} Let $I$ be the edge ideal of a simple graph $G$. For all $s \ge 1$, let $J$ be an intermediate ideal in $\Inter(I^s, I^{(s)})$. Then 
$$\reg (J)=\reg (I^s) =\reg (I^{(s)}).$$
\end{conj}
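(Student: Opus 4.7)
The plan is to tackle the conjecture via a filtration argument, treating the endpoint equality $\reg(I^s) = \reg(I^{(s)})$ and the intermediate rigidity simultaneously. The endpoint equality is itself a major open problem for edge ideals, established only in certain classes (bipartite, very well-covered, and others) by work of B\`ui, Fakhari, Jayanthan--Selvaraja and collaborators; a successful approach to the full conjecture will presumably subsume or generalize these results.

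Fix an enumeration $f_1, \ldots, f_t$ of the minimal monomial generators of $I^{(s)} \setminus I^s$ and set $J_i := I^s + (f_1, \ldots, f_i)$, so that $J_0 = I^s$ and $J_t = I^{(s)}$. Every $J \in \Inter(I^s, I^{(s)})$ arises this way after reordering generators, so it suffices to prove $\reg(J_{i-1}) = \reg(J_i)$ for every $i$ and every enumeration. From the short exact sequence
$$0 \to (S/(J_{i-1} : f_i))(-d_i) \to S/J_{i-1} \to S/J_i \to 0, \qquad d_i := \deg f_i,$$
standard regularity inequalities show that $\reg(J_{i-1}) = \reg(J_i)$ whenever
$$\reg(J_{i-1} : f_i) + d_i \leq R$$
for any integer $R$ with $R \leq \min_j \reg(J_j)$. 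A natural target is $R = \reg(I^{(s)})$; this would further require the lower bound $\reg(J_j) \geq \reg(I^{(s)})$ along the chain, which I would pursue by locating an extremal multigraded Betti number of $I^{(s)}$ (e.g.\ via Hochster-type formulas) whose support is preserved by every intermediate ideal in $\Inter(I^s, I^{(s)})$.

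The colon analysis is the combinatorial heart of the problem. Each minimal generator $f$ of $I^{(s)} \setminus I^s$ decomposes as $f = x_C^{\mathbf a} \cdot g$, where $C$ is an odd hole of $G$ supplying the symbolic contribution and $g$ is a product of edge generators coming from $G \setminus C$. For $J_0 = I^s$ alone, one expects $(I^s : f)$ to reduce, after factoring out $\supp(f)$, to an ideal built from edges of a residual subgraph together with lower powers of $I$, yielding a clean inductive bound on $\reg(I^s : f) + d$ via double induction on $s$ and on the number of odd holes of $G$. For $i > 1$ the colon $(J_{i-1} : f_i)$ acquires extra generators $f_j/\gcd(f_i, f_j)$ for $j < i$, whose structure depends on how $f_i$ and $f_j$ interact, for instance whether they share a common odd hole or overlap on a path.

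The main obstacle is that the estimate must hold for \emph{every} ordering of $f_1, \ldots, f_t$, since the conjecture ranges over every subset of new generators. This forces a uniform combinatorial lemma describing the colon of an arbitrary sum of odd-hole generators by $f_i$ modulo $I^s$, bounded independently of the prior generators chosen. Producing such a lemma appears to require structural understanding of how odd holes of $G$ can overlap --- genuinely new graph-theoretic input that is arguably the deep reason the conjecture has not yet been proved in full generality.
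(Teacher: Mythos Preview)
The statement you are attempting is Conjecture~A in the paper, not a theorem; the paper does not prove it in general. What the paper establishes is the special case $\alpha(G)=2$ (equivalently, $\dim\Delta=1$), as Theorem~\ref{dim1}, together with a handful of further small graphs in Section~\ref{sec_small_graphs}. So there is no ``paper's own proof'' of the full conjecture to compare your proposal against.

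That said, it is worth contrasting your strategy with the paper's method in the $\alpha(G)=2$ case. The paper does \emph{not} filter from $I^s$ to $I^{(s)}$ by adding generators one at a time, and it makes no use of short exact sequences or colon bounds of the form $\reg(J_{i-1}:f_i)+d_i$. Instead it works directly with local cohomology via Takayama's degree complexes: for an extremal exponent $(\a,i)$ of $J$ it analyses $\sqrt{J:x^\a}$ combinatorially (Lemmas~\ref{criterion_in_sym} and~\ref{criterion_in_power}) and proves that once $|\a|\ge 2s-1$ one has $\Delta_\a(J)=\Delta_\a(I^s)$ (Lemma~\ref{intermediate_reduction}), so every intermediate ideal inherits the same extremal behaviour as $I^s$. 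The required upper bound on $|\a|+i$ is then obtained by a case analysis on good decompositions $x^\a=MN$ and critical indices. This approach entirely avoids the need to control $\reg(J_{i-1}:f_i)$ uniformly over all orderings of the added generators, which you correctly identify as the main obstacle in your filtration scheme.

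Your proposal is candid about its gaps, but two points deserve emphasis. First, your structural claim that every minimal generator $f\in I^{(s)}\setminus I^s$ factors as $x_C^{\mathbf a}\cdot g$ with $C$ a single odd hole is not correct in general: generators of $I^{(s)}$ can involve several interacting odd cycles, and the combinatorics is substantially more intricate than a single-hole-plus-edges picture suggests. Second, even granting the colon bound and the uniform lower bound, your scheme would simultaneously establish $\reg(I^s)=\reg(I^{(s)})$ for all edge ideals --- itself a well-known open problem --- so the plan is, as you implicitly acknowledge, a reformulation of the difficulty rather than a reduction of it.
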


Theorem \ref{dim1} settles Conjecture \ref{conj_rigid} for edge ideals of graphs with independence number $2$. This paper is a continuation of \cite{MNPTV} where we analyse the degree complex $\Delta_\a(J)$ via the radical ideal $\sqrt{J:x^\a}$. 

We now describe the idea of proof of Theorem \ref{dim1}. An exponent $(\a,i) \in \NN^n \times \NN$ is called an extremal exponent of $J$ if $\reg S/J = |\a| + i$ and $\lk_{\Delta_\a(J)} F$ has a non-vanishing homology in degree $i-1$ for some face $F$ of $\Delta_\a(J)$ such that $F \cap \supp a = \emptyset$. When $\Delta$ contains a triangle, we prove that $x^\a \in J$ if $|\a| \ge 3s$ and $a_i \le s$ for all $i$. We then deduce that $\reg J = 3s$. The main work is in the case where $\Delta$ contains no triangles. Then $I_\Delta = I(G)$ is the edge ideal of $G$, the complement of $\Delta$. By Lemma \ref{reg_lower_bound}, it suffices to prove that 
\begin{equation}\label{eq_upper_bound}
   |\a| +i \le \begin{cases} 2s & \text{ when } \girth \Delta = 4,\\ 2s-1 & \text{ when } \girth \Delta \ge 5. \end{cases} 
\end{equation}
The main tools to analyse $\sqrt{J:x^\a}$, hence $\Delta_\a(J)$ are Lemma \ref{criterion_in_sym} and Lemma \ref{criterion_in_power}. These lemmas provide a new approach toward understanding the behaviour of the sequences $\{\reg I^s \mid s \ge 1\}$ and $\{ \reg I^{(s)} \mid s \ge 1\}$ for general edge ideals. While the description of $\Delta_\a(I^{(s)})$ is known, to our knowledge Lemma \ref{criterion_in_power} gives a first description of $\Delta_\a(I^s)$ for edge ideals of graphs. For a monomial $f \in S$, the $I$-order of $f$, denoted $\ord_I(f)$ is defined by $\ord_I(f) = \max (t \mid  f \in I^t )$.

\smallskip

{\noindent \bf Lemma \ref{criterion_in_power}.}  Let $G$ be a simple graph. Let $I = I(G)$. Let $F$ be an independent set of $G$, and $\a \in \NN^n$ an exponent. Assume that
\begin{equation}
  \sum_{j\in N(F)} a_j + \ord_I(\prod_{u \notin N[F]} x_u^{a_u}) \ge s, \label{eq_in_power0} 
\end{equation}
then $x_F \in \sqrt{I^s:x^\a}$. Conversely, if $x_F$ is a minimal generator of $\sqrt{I^s:x^\a}$ then \eqref{eq_in_power0} holds.

We now outline the steps, where Lemma \ref{criterion_in_sym} and Lemma \ref{criterion_in_power} play a crucial role to establish the upper bound \eqref{eq_upper_bound}. Assume that $\ord_I(x^\a) = r$. A decomposition $x^\a = MN$ is called a good decomposition if $M$ is a minimal monomial generator of $I^r$ and $N \notin I$. An index $i$ where $x_i$ is of highest exponent in $N$ is called a critical index of $\a$ with respect to this decomposition. The steps are:
\begin{enumerate}
    \item We prove that if $i = 2$, then $|\a| \le (\girth \Delta)/(\girth \Delta - 2)(s-1)$, reducing the problem to consider extremal exponents $(\a,i)$ where $|\a| \ge 2s-1$ and $i \le 1$.
    \item Assuming $|\a| \ge 2s-1$, we prove that $\Delta_\a(J) = \Delta_\a(I^s)$.
    \item Assuming $|\a| \ge 2s-1$, let $x^\a = MN$ be a good decomposition of $x^\a$. Let $i$ be a critical index of $N$ with respect to this decomposition. We prove that if $x_i^2|N$, then $\Delta_\a(I^s)$ is a cone over $i$. Consequently, this implies that $|\a| \le 2s$, and if $|\a| = 2s$, then $N$ is squarefree and $\deg N = 2$.
    \item Assuming $|\a| = 2s$, we prove that $\girth \Delta = 4$ and $i = 0$. This completes the case $\girth \Delta = 4$.
    \item Assuming $\girth \Delta \ge 5$ and $|\a| = 2s-1$, we prove that $i = 0$.
\end{enumerate}

One of the main obstructions to carry out this procedure for computing the regularity of intermediate ideals of edge ideals of other classes of graphs is the computation of the order of monomials of the form in Lemma \ref{criterion_in_power}. In subsequent work, we can overcome this difficulty and compute the regularity of intermediate ideals of edge ideals for other classes of graphs.

Now we explain the organization of the paper. In Section \ref{sec_basic}, we recall some notation and basic facts about the symbolic powers of a squarefree monomial ideal, the degree complexes, and Castelnuovo-Mumford regularity. In Section \ref{sec_inter_dim1}, we prove Theorem \ref{dim1}. In Section \ref{sec_small_graphs}, we provide an example where we can verify the rigidity property of the regularity of intermediate ideals for graphs with $\alpha(G) > 2$ to further illustrate our procedure.

\section{Castelnuovo-Mumford regularity, symbolic powers and degree complexes}\label{sec_basic}
In this section, we recall some definitions and properties concerning Castelnuovo-Mumford regularity, the symbolic powers of a squarefree monomial ideal, and the degree complexes of a monomial ideal. The interested reader is referred to (\cite{BH, E, S}) for more details. The material in this section follows closely \cite[Section 2]{MNPTV}.

\subsection{Simplicial complexes and Stanley-Reisner correspondence} 
Let $\Delta$ be a simplicial complex on $[n]=\{1,\ldots, n\}$ that is a collection of subsets of $[n]$ closed under taking subsets. We put $\dim F = |F|-1$, where $|F|$ is the cardinality of $F$. The dimension of $\Delta$ is $\dim \Delta = \max \{ \dim F \mid F \in \Delta \}$.  The set of its maximal elements under inclusion, called by facets, is denoted by $\F(\Delta)$.

A simplicial complex $\D$ is called a cone over $x\in [n]$ if $x\in B$ for any $B\in \F(\Delta)$. If $\D$ is a cone, it is acyclic (i.e., has vanishing reduced homology).

For a face $F\in\Delta$, the link of $F$ in $\Delta$ is the subsimplicial complex of $\Delta$ defined by
$$\lk_{\Delta}F=\{G\in\Delta \mid  F\cup G\in\Delta, F\cap G=\emptyset\}.$$

For each subset $F$ of $[n]$, let $x_F=\prod_{i\in F}x_i$ be a squarefree monomial in $S$. We now recall the Stanley-Reisner correspondence

\begin{defn}For a squarefree monomial ideal $I$, the Stanley-Reisner complex of $I$ is defined by
$$ \Delta(I) = \{ F \subset [n] \mid x_F \notin I\}.$$

For a simplicial complex $\Delta$, the Stanley-Reisner ideal of $\Delta$ is defined by
$$I_\Delta = (x_F \mid  F \notin \Delta).$$
The Stanley-Reisner ring of $\Delta$ is the quotient by the Stanley-Reisner ideal, $K[\Delta] =  S/I_\Delta.$
\end{defn}
From the definition, it is easy to see the following:
\begin{lem}\label{cone} Let $I, J$ be squarefree monomial ideals of  $S = K[x_1,\ldots, x_n]$. Then 
\begin{enumerate}
    \item $\Delta(I)$ is a cone over $t \in [n]$ if and only if $x_t$ is not divided by any minimal generator of $I$.
    \item $I \subseteq J$ if and only if $\Delta(I) \supseteq \Delta(J)$.
\end{enumerate}
\end{lem}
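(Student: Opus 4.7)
The plan is to handle the two items separately by direct unwinding of the Stanley--Reisner correspondence; both are bookkeeping once the definitions are laid out, and I do not anticipate any real obstacle.

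For (1), the condition ``$\Delta(I)$ is a cone over $t$'' unpacks to: for every face $F \in \Delta(I)$ with $t \notin F$, also $F \cup \{t\} \in \Delta(I)$. I would read the generator condition as saying that the variable $x_t$ does not occur in any minimal generator of $I$ (i.e.\ no minimal generator is divisible by $x_t$). To prove necessity, I would argue contrapositively: if some minimal generator $x_G$ of $I$ contains the variable $x_t$, then by minimality $x_{G \setminus \{t\}} \notin I$, so $G \setminus \{t\} \in \Delta(I)$, while $G = (G \setminus \{t\}) \cup \{t\} \notin \Delta(I)$ since $x_G \in I$; this violates the cone property. For sufficiency, if no minimal generator involves $x_t$ and $F \in \Delta(I)$ with $t \notin F$, then assuming $x_{F \cup \{t\}} = x_t \cdot x_F \in I$ forces some minimal generator $g$ of $I$ to divide $x_t \cdot x_F$; since $x_t \nmid g$, we get $g \mid x_F$, contradicting $x_F \notin I$.

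For (2), the forward direction is immediate from the definition of $\Delta(\cdot)$: if $I \subseteq J$ and $F \in \Delta(J)$, then $x_F \notin J$, hence $x_F \notin I$, so $F \in \Delta(I)$. For the reverse direction, I would use that a monomial ideal is determined by its (squarefree) minimal generating set: given any minimal generator $x_G$ of $I$, if $x_G$ were not in $J$ then $G \in \Delta(J)$, and the assumed containment $\Delta(J) \subseteq \Delta(I)$ would give $x_G \notin I$, a contradiction. Hence every minimal generator of $I$ lies in $J$, so $I \subseteq J$.

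The only delicate point is making sure the divisibility phrasing in (1) is interpreted correctly, namely that $x_t$ does not divide any minimal generator; otherwise the argument is a direct manipulation of the definitions of face, minimal generator, and $\Delta(I)$, with no essential combinatorial or algebraic ingredient beyond the squarefreeness of the generators.
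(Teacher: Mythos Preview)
Your argument is correct in both parts, and your reading of the awkward phrase ``$x_t$ is not divided by any minimal generator'' as ``no minimal generator of $I$ is divisible by $x_t$'' matches how the lemma is used throughout the paper (see, e.g., Remark~\ref{rem_mingens_degree_complex}). The paper itself does not supply a proof, stating only that the lemma is easy to see from the definitions; your direct unwinding of the Stanley--Reisner correspondence is exactly the kind of verification the authors had in mind.
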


\subsection{Castelnuovo-Mumford regularity}\label{subsection_reg} 
Let $\m = (x_1,\ldots, x_n)$ be the maximal homogeneous ideal of $S = K[x_1,\ldots, x_n]$ a polynomial ring over a field $K$. For a finitely generated graded $S$-module $L$, let
$$a_i(L)=
\begin{cases}
\max\{j\in\ZZ \mid H_{\m}^i(L)_j \ne 0\} &\text{ if  $H_{\m}^i(L)\ne 0$}\\ 
-\infty &\text{ otherwise,}
\end{cases}
$$
where $H^{i}_{\m}(L)$ denotes the $i$-th local cohomology module of $L$ with respect to $\m$. Then, the Castelnuovo-Mumford regularity (or regularity for short) of $L$ is defined to be
$$\reg(L) = \max\{a_i(L) +i\mid i = 0,\ldots, \dim L\}.$$

For a non-zero and proper homogeneous ideal $J$ of $S$ we have $\reg(J)=\reg(S/J)+1$.

\subsection{Graphs and their edge ideals}

Let $G$ denote a finite simple graph over the vertex set $V(G)=[n] = \{1,2,\ldots,n\}$ and the edge set $E(G)$. For a vertex $x\in V(G)$, let the neighbour of $x$ be the subset $N_G(x)=\{y\in V(G)~|~ \{x,y\}\in E(G)\}$, and set $N_G[x]=N_G(x)\cup\{x\}$. For a subset $U$ of the vertices set $V(G)$, $N_G(U)$ and $N_G[U]$ are defined by $N_G(U)=\cup_{u\in U}N_G(u)$ and $N_G[U]=\cup_{u\in U}N_G[u]$. If $G$ is fixed, we shall use $N(U)$ or $N[U]$ for short.

An independent set in $G$ is a set of pairwise non-adjacent vertices. A maximal independent set is an independent set that is maximal under inclusion. The independence number of $G$, denoted by $\alpha(G)$, is the cardinality of a maximal independent set of maximum size. 

A subgraph $H$ is called an induced subgraph of $G$ if for any vertices $u,v\in V(H)\subseteq V(G)$ then $\{u,v\}\in E(H)$ if and only if $\{u,v\}\in E(G)$.

An induced matching is a subset of the edges that do not share any vertices and it is an induced subgraph. The induced matching number of $G$, denoted by $\mu(G)$, is the largest size of an induced matching in $G$.

A $m$-cycle in $G$ is a sequence of $m$ distinct vertices $1,\ldots, m\in V(G)$ such that $\{1,2\},\ldots, \{m-1,m\}, \{m,1\}$ are edges of $G$. The girth of $G$, denoted $\girth (G)$ is the size of a smallest induced cycle in $G$.

The edge ideal of $G$ is defined to be
$$I(G)=(x_ix_j~|~\{i,j\}\in E(G))\subseteq S.$$
For simplicity, we often write $i \in G$ (resp. $ij \in G$) instead of $i \in V(G)$ (resp. $\{i,j\} \in E(G)$).

A clique in $G$ is a complete subgraph of $G$. We also call a clique of size $3$ a triangle. 

\subsection{Symbolic powers} 
Let $I$ be a non-zero and proper homogeneous ideal of $S$. Let $\{P_1,\ldots,P_r\}$ be the set of the minimal prime ideals of $I$. Given a positive integer $s$, the $s$-th symbolic power of $I$ is defined by
$$I^{(s)}=\bigcap_{i=1}^r I^sS_{P_i}\cap S.$$

For a monomial $f$ in $S$, we denote $\frac{\partial^* (f)}{\partial^*(x^\a)}$ the $*$-partial derivative of $f$ with respect to $x^\a$, which is derivative without coefficients. In general, $\partial f / \partial x^\a = c \partial^*(f) / \partial^*(x^\a)$ for some constant $c$. We define 
$$I^{[s]} =  ( f \in S ~|~ \frac{\partial^* f }{\partial^* x^\a} \in I, \text{ for all } x^\a \text{ with } |\a| = s -1),$$ the $s$-th $*$-differential power of $I$. When $I$ is a squarefree monomial ideal, the symbolic powers of $I$ is equal to the $*$-differential powers of $I$. 

\begin{lem}\label{differential_criterion} Let $I$ be a squarefree monomial ideal. Then $I^{(s)} = I^{[s]}.$
\end{lem}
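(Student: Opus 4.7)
The plan is to verify the identity on monomial generators. Since $I$ is squarefree monomial, $I^{(s)} = \bigcap_F P_F^s$ is a monomial ideal, where $P_F = (x_i : i \in F)$ ranges over the minimal primes of $I$, indexed by the complements $F = [n] \setminus \sigma$ of facets $\sigma$ of $\Delta(I)$. Moreover $I^{[s]}$ is monomial because the $*$-derivative of a monomial is a monomial or zero, and the condition $\partial^*(f)/\partial^*(x^\a) \in I$ (a monomial ideal) reduces term-by-term. For a monomial $x^\b$, the two memberships translate into
$$x^\b \in I^{(s)} \iff \sum_{i \in F} b_i \ge s \text{ for every facet complement } F,$$
$$x^\b \in I^{[s]} \iff \supp(\b - \a) \notin \Delta(I) \text{ for every } \a \le \b \text{ with } |\a| = s-1.$$

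For the inclusion $I^{(s)} \subseteq I^{[s]}$, take $x^\b \in I^{(s)}$ and any $\a \le \b$ with $|\a| = s-1$; I argue by contradiction that $\supp(\b - \a) \notin \Delta(I)$. If $T := \supp(\b - \a)$ were a face, extend $T$ to a facet $\sigma \supseteq T$ and set $F = [n] \setminus \sigma$, indexing a minimal prime $P_F$. Every $i \in F$ lies outside $T$, forcing $b_i - a_i = 0$, i.e., $a_i = b_i$. Summing gives $\sum_{i \in F} b_i = \sum_{i \in F} a_i \le |\a| = s-1$, contradicting $x^\b \in P_F^s$.

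For $I^{[s]} \subseteq I^{(s)}$, take a monomial $x^\b \in I^{[s]}$ and argue contrapositively. Suppose some minimal prime $P_{F_0}$ satisfies $\sum_{i \in F_0} b_i \le s-1$, and let $\sigma = [n] \setminus F_0$ be the corresponding facet. I construct $\a \le \b$ with $|\a| = s-1$ by first setting $a_i = b_i$ for $i \in F_0$ (contributing at most $s-1$) and then filling in exponents at indices $i \in \sigma$ with $a_i \le b_i$ until $|\a|$ reaches $s-1$. This distribution is feasible because membership in $I^{[s]}$ forces $|\b| \ge s-1$ (otherwise the choice $\a = \b$ produces the derivative $1 \notin I$, violating the defining condition). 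The resulting $\a$ satisfies $\supp(\b - \a) \subseteq \sigma \in \Delta(I)$, so $x^{\b - \a} \notin I$, contradicting $x^\b \in I^{[s]}$.

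The argument is essentially combinatorial bookkeeping; its only structural input is the Stanley-Reisner dictionary identifying facets of $\Delta(I)$ with complements of minimal primes of $I$. I expect the main subtlety to be the distribution step in the reverse direction, specifically verifying that the degree budget $|\b| \ge s-1$ is available and that $\a$ can be chosen with $\a \le \b$ and $|\a| = s-1$ while keeping $a_i = b_i$ throughout $F_0$. All other steps are direct translations between exponent-sum conditions and face conditions.
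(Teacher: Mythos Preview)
The paper states this lemma without proof, treating it as a known fact, so there is no authorial argument to compare against; your direct combinatorial verification via the Stanley--Reisner dictionary is the natural route, and both inclusions are argued correctly in spirit.

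There is, however, a gap in your justification that $|\b|\ge s-1$. You claim that otherwise ``the choice $\a=\b$ produces the derivative $1\notin I$, violating the defining condition,'' but the paper's definition of $I^{[s]}$ quantifies only over $\a$ with $|\a|=s-1$; when $|\b|<s-1$ the vector $\a=\b$ is not admissible, and in fact every $\a$ with $|\a|=s-1$ satisfies $\a\not\le\b$, so all the relevant $*$-derivatives vanish and lie in $I$ trivially. Read literally, the constant $1$ would then lie in the defining set for every $s\ge 2$, which would force $I^{[s]}=S$ and falsify the lemma. The standard definition of the differential power uses $|\a|\le s-1$ rather than $|\a|=s-1$, and under that convention your argument with $\a=\b$ goes through verbatim (for $I$ squarefree the two conventions agree on monomials of degree at least $s-1$, since any $\a'\le\b$ with $|\a'|<s-1$ can be enlarged to some $\a\le\b$ with $|\a|=s-1$ and $\supp(\b-\a)\subseteq\supp(\b-\a')$). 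You should state explicitly that you are adopting the $\le s-1$ reading, which the paper presumably intends.
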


For a monomial $f$ in $S$, and $i \in [n]$, $\deg_i(f) = \max(t \mid  x_i^t \text{ divides }f)$ denotes the degree of $x_i$ in $f$. The support of $f$, denoted $\supp(f)$, is the set of all indices $i \in [n]$ such that $x_i|f$. The radical of $f$ is defined by $\sqrt{f} = \prod_{i \in \supp f} x_i$. As a consequence, we deduce the following property of generators of $I^{(s)}$.

\begin{lem}\label{partial_deg_bound_1} Let $I$ be a squarefree monomial ideal. Let $f$ be a minimal generator of $I^{(s)}$. Then $\deg_i(f) \le s$ for all $i = 1, ..., n$.
\end{lem}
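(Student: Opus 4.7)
The plan is to argue by contradiction using the differential characterization of symbolic powers provided by Lemma \ref{differential_criterion}. Suppose $f$ is a minimal monomial generator of $I^{(s)}$ but $\deg_i(f) \ge s+1$ for some index $i$. I would set $g = f/x_i$ and derive a contradiction by showing $g \in I^{(s)}$, which violates the minimality of $f$.

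By Lemma \ref{differential_criterion}, to verify $g \in I^{(s)} = I^{[s]}$ it suffices to check that $\partial^* g/\partial^* x^\a \in I$ for every exponent $\a$ with $|\a| = s-1$. If $x^\a \nmid g$ the $*$-derivative vanishes and the condition is automatic, so I would fix $\a$ with $x^\a \mid g$; then $x^\a \mid f$ as well, and a direct comparison of monomials gives
$$\frac{\partial^* g}{\partial^* x^\a} \;=\; \frac{g}{x^\a} \;=\; \frac{1}{x_i}\cdot \frac{f}{x^\a} \;=\; \frac{1}{x_i}\cdot \frac{\partial^* f}{\partial^* x^\a}.$$
Since $f \in I^{(s)} = I^{[s]}$, the monomial $m := \partial^* f/\partial^* x^\a$ lies in $I$, and what remains is to show that $m/x_i$ also lies in $I$.

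The crux is an arithmetic observation together with a squarefree-ness argument. Because $|\a| = s-1$ forces $a_i \le s-1$, one has $\deg_i(m) = \deg_i(f) - a_i \ge (s+1) - (s-1) = 2$, so $x_i^2 \mid m$. Consequently $m$ and $m/x_i$ have the same support, hence the same radical; since $I$ is a squarefree monomial ideal, membership of a monomial in $I$ depends only on its radical (a minimal generator of $I$ divides a monomial $h$ iff it divides $\sqrt{h}$). Therefore $m \in I$ forces $m/x_i \in I$, as required. I do not anticipate a serious obstacle for this lemma: the argument is essentially a clean application of Lemma \ref{differential_criterion} combined with the elementary fact that for a squarefree monomial ideal $J$ and a monomial $m$ with $x_i^2 \mid m$, one has $m \in J$ iff $m/x_i \in J$. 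The only point demanding care is the bookkeeping of when $x^\a$ divides $g$ versus $f$ so that the comparison of $*$-derivatives is well-defined.
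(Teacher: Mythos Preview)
Your proof is correct and follows essentially the same approach as the paper's. Both argue by contradiction via Lemma \ref{differential_criterion}, set $g=f/x_i$, observe that for $|\a|=s-1$ the monomials $\partial^* f/\partial^* x^\a$ and $\partial^* g/\partial^* x^\a$ differ only by a factor of $x_i$ while sharing the same radical (since $x_i^2$ divides the former), and conclude from the squarefree-ness of $I$; the paper phrases this as ``$g\notin I^{(s)}$ yields a contradiction'' whereas you verify ``$g\in I^{(s)}$'' directly, but the content is identical.
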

\begin{proof}
 Assume by contradiction that $f=x_1^tg$ is a minimal generator of $I^{(s)}$ where $t >s$ and $x_1 \not | g$. Since $f$ is minimal, $f_1 = x_1^{t-1}g$ does not belong to $I^{(s)}$. By Lemma \ref{differential_criterion}, there exists an exponent $\a$ such that $|\a | = s-1$ and 
 $$h_1 = \partial^* (f_1)/\partial^*(x^\a) \notin I.$$
 Since $|\a| = s-1$ and $t-1 \ge s$, $x_1 | h_1$. This implies that 
 $$h = \partial^*(f)/\partial^*(x^\a) = x_1h_1.$$
 By Lemma \ref{differential_criterion}, $h \in I$, but this is a contradiction, as $I$ is squarefree and $\sqrt{h} = \sqrt{h_1}$. The conclusion follows.
\end{proof}

As another consequence, we give a necessary condition for a squarefree monomial to be in $\sqrt{J:x^\a}$.

\begin{lem}\label{criterion_in_sym} Let $I$ be a squarefree monomial ideal. Let $J \in \Inter(I^s,I^{(s)})$ be an intermediate ideal lying between $I^s$ and $I^{(s)}$. Let $\a \in \NN^n$ be an exponent such that $x^\a \notin J$. Assume that $f \in \sqrt{J : x^\a}$ and that $f \notin I$. Let $F$ be a facet of $\Delta(I)$ that contains $\supp (f)$. Then $\sum_{i \notin F} a_i \ge s$.
\end{lem}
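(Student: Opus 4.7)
My plan is to reduce the statement to the standard primary decomposition of $I^{(s)}$ and then compare degrees in the variables outside $F$. The key fact is that for a squarefree monomial ideal $I$, one has
$$I^{(s)} = \bigcap_{G \in \F(\Delta(I))} P_G^s, \qquad P_G := (x_j : j \notin G),$$
because the minimal primes of $I$ are exactly the $P_G$ for facets $G$ of $\Delta(I)$, and each $P_G^s$ is $P_G$-primary (as $P_G$ is generated by a regular sequence of variables). This can either be quoted directly from the definition of symbolic powers, or re-derived from the $*$-differential criterion in Lemma \ref{differential_criterion}.

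With this in hand, the argument proceeds as follows. Since $f \in \sqrt{J:x^\a}$, there is some $t \ge 1$ with $x^\a f^t \in J$, and because $J \subseteq I^{(s)}$, in particular $x^\a f^t \in P_F^s$, where $F$ is the facet of $\Delta(I)$ containing $\supp(f)$. Since $J$ is a monomial ideal, $\sqrt{J:x^\a}$ is a (squarefree) monomial ideal, so without loss of generality we may take $f$ to be a squarefree monomial $x_{\supp f}$; by hypothesis $\supp f \subseteq F$, so every variable dividing $f^t$ lies inside $F$ and hence outside $P_F$. Consequently, the total degree of $x^\a f^t$ in the variables $\{x_j : j \notin F\}$ is exactly $\sum_{j \notin F} a_j$.

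Finally, membership of a monomial in $P_F^s$ is equivalent to its total degree in the variables $\{x_j : j \notin F\}$ being at least $s$. Combining this with the previous step gives $\sum_{j \notin F} a_j \ge s$, which is the desired conclusion.

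There is no substantive obstacle here: the only subtlety is to notice that the power $f^t$ contributes nothing to the $P_F$-degree (so raising to the power $t$ is harmless) and that one is free to assume $f$ is a squarefree monomial because $\sqrt{J:x^\a}$ is a squarefree monomial ideal. The proof should fit in a short paragraph.
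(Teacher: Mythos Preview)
Your argument is correct. It is slightly different from the paper's route, though the two are close cousins.

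The paper argues by contradiction using the $*$-differential characterization $I^{(s)} = I^{[s]}$ (Lemma~\ref{differential_criterion}): assuming $\sum_{i\notin F} a_i \le s-1$, one exhibits a concrete exponent $\b$ supported outside $F$ with $|\b|\le s-1$ such that $\partial^*(f^u x^\a)/\partial^*(x^\b)$ is supported in $F$, hence not in $I$; this forces $f^u x^\a \notin I^{(s)}$ for all $u$, contradicting $f \in \sqrt{J:x^\a}\subseteq \sqrt{I^{(s)}:x^\a}$. You instead use the equivalent primary decomposition $I^{(s)}=\bigcap_G P_G^{\,s}$ and read off the inequality directly from $x^\a f^t \in P_F^{\,s}$, noting that $f^t$ contributes nothing to the $P_F$-degree because $\supp f\subseteq F$. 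Your version is arguably more direct and avoids the contradiction; the paper's version has the advantage of staying self-contained with the $*$-differential criterion already established there. Two minor remarks: your reduction to $f$ squarefree is harmless but unnecessary, since only $\supp f\subseteq F$ matters; and neither your proof nor the paper's actually uses the hypothesis $x^\a\notin J$ in an essential way (it merely rules out the degenerate case $\sqrt{J:x^\a}=S$).
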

\begin{proof} Assume by contradiction that $\sum_{i \notin F} a_i \le s-1$. Let $\b \in \NN^n$ be an exponent such that $b_i = 0$ for all $i \in F$, $b_i = a_i$ for all $i\notin F$. Then $|\b| \le s-1$. Furthermore,  
	$$\supp \big ( \dfrac{\partial^* (f^u x^\a)}{\partial^* (x^\b)} \big ) \subseteq F$$
	implying that $f^u x^\a \notin I^{(s)}$ for all $u \ge 1$ by Lemma \ref{differential_criterion}, which is a contradiction to the fact that $f \in \sqrt{J:x^\a} \subseteq \sqrt{I^{(s)}:x^\a}.$
\end{proof}

\subsection{Degree complexes}
For a monomial ideal $I$ in $S$, Takayama in \cite{T} found a combinatorial formula for $\dim_KH_\m^i(S/I)_\a$ for all $\a\in\ZZ^n$ in terms of certain simplicial complexes which are called degree complexes. For every $\a = (a_1,\ldots, a_n) \in \ZZ^n$ we set $G_\a = \{i\mid \ a_i < 0\}$ and write $x^{\a} = \Pi_{j=1}^n x_j^{a_j}$. Thus, $G_\a =\emptyset$ whenever $\a \in \NN^n$. The degree complex $\D_\a(I)$ is the simplicial complex whose faces are $F \setminus G_\a$, where $G_\a\subseteq F\subseteq [n]$, so that for every minimal generator $x^\b$ of $I$ there exists an index $i \not\in F$ with $a_i < b_i$. It is noted that $\D_\a(I)$ may be either the empty complex or $\{\emptyset\}$ and its vertex set may be a proper subset of $[n]$. The next lemma is useful to compute the regularity of a monomial ideal in terms of its degree complexes.

\begin{lem}\label{Key0}
Let $I$ be a monomial ideal in $S$. Then
\begin{multline*}
\reg(S/I)=\max\{|\a|+i~|~\a\in\NN^n,i\ge 0,\h_{i-1}(\lk_{\D_\a(I)}F;K)\ne 0\\ \text{ for some $F\in \D_\a(I)$ with $F\cap \supp \a=\emptyset$}\}.
\end{multline*}
In particular, if $I=I_\D$ is the Stanley-Reisner ideal of a simplicial complex $\D$ then
$\reg(K[\D])=\max\{i~|~i\ge 0,\h_{i-1}(\lk_{\D}F;K)\ne 0\text{ for some }F\in \D\}$.
\end{lem}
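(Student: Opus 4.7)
The plan is to combine Takayama's combinatorial formula for the local cohomology of a monomial ideal with a reparametrization of multidegrees that converts the indexing from $\ZZ^n$ to $\NN^n$ paired with a face. Starting from the $\ZZ^n$-grading,
$$\reg(S/I)=\max\{|\a|+i\mid \a\in\ZZ^n,\ i\ge 0,\ H^i_\m(S/I)_\a\ne 0\},$$
Takayama's formula expresses the right side in terms of $\h_{i-|G_\a|-1}(\Delta_\a(I);K)$, with the cohomology vanishing when certain positivity conditions on $\a$ fail.

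Next I would observe directly from the definition of $\Delta_\a(I)$ that this complex depends only on $G_\a$ and on the entries $(a_i)_{i\notin G_\a}$. In particular, replacing each $a_i$ by $-1$ for $i\in G_\a$ leaves $\Delta_\a(I)$ unchanged and only increases $|\a|$, so for computing the maximum one may normalize to $a_i=-1$ on $G_\a$. The crux is then the identification
$$\Delta_\a(I)=\lk_{\Delta_{\a^+}(I)}(G_\a),$$
where $\a^+\in\NN^n$ is obtained from $\a$ by setting negative entries to $0$. This follows by checking that a set $G$ disjoint from $G_\a$ is a face of $\lk_{\Delta_{\a^+}(I)}(G_\a)$ iff $G\cup G_\a$ is a face of $\Delta_{\a^+}(I)$, which, since $\a^+_i=a_i$ for $i\notin G_\a$, is exactly the condition that $G\in\Delta_\a(I)$. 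Setting $F=G_\a$ and $j=i-|F|$, the normalization gives $|\a|+i=|\a^+|+j$, producing a bijection between the pairs $(\a,i)$ indexing the first display and the triples $(\a^+,F,j)$ with $\a^+\in\NN^n$, $F\in\Delta_{\a^+}(I)$, $F\cap\supp\a^+=\emptyset$, under which the non-vanishing conditions match. Maximizing both sides yields the first formula.

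For the second statement, I would note that $\Delta_\o(I_\Delta)=\Delta$ and, for $\a\in\NN^n$ with some $a_j\ge 1$, the condition $a_i<b_i$ can never hold at $i=j$ for a squarefree minimal generator $x^\b$. Hence $\Delta_\a(I_\Delta)$ is a cone over $j$, and so is $\lk_{\Delta_\a(I_\Delta)}F$ whenever $j\notin F$ (which is forced by $F\cap\supp\a=\emptyset$); this kills the reduced homology. Thus only $\a=\o$ contributes, and the general formula specializes to Reisner's formula $\reg(K[\Delta])=\max\{i\mid \h_{i-1}(\lk_\Delta F;K)\ne 0\}$.

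The main obstacle is mostly careful bookkeeping: verifying the link identification, tracking how $|\a|+i$ transforms under the reparametrization, and invoking Takayama with the correct vanishing hypotheses on positive entries so that no spurious exponents enter the maximum. No deep input beyond Takayama's formula is required.
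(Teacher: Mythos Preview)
The paper does not prove this lemma; it is stated without proof as a background result (the formula is essentially a reformulation of Takayama's theorem \cite{T}, and this particular form appears in the authors' companion paper \cite{MNPTV}). So there is no ``paper's own proof'' to compare against.

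Your argument is correct and is the standard derivation. The two ingredients---the normalization $a_i\mapsto -1$ on $G_\a$ together with the link identification $\Delta_\a(I)=\lk_{\Delta_{\a^+}(I)}(G_\a)$---are exactly how one passes from Takayama's $\ZZ^n$-graded formula to the $\NN^n$-indexed version stated here. Your bookkeeping for $|\a|+i=|\a^+|+j$ under $j=i-|G_\a|$ is right, and the observation that $\tilde H_{j-1}=0$ for $j<0$ ensures the range $i\ge 0$ in the statement is harmless. The specialization to $I=I_\Delta$ via the cone argument (any $a_j\ge 1$ forces $\Delta_\a(I_\Delta)$ to be a cone over $j$, and the link over a face avoiding $j$ inherits this) is also correct and recovers Reisner's criterion. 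One small point you might make explicit: when $G_\a\notin\Delta_{\a^+}(I)$ the complex $\Delta_\a(I)$ is void and both sides vanish, so the link identity holds trivially in that case as well.
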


\begin{rem}\label{rem_T} Let $I$ be a monomial ideal in $S$ and a vector $\a\in \NN^n$. In the proof of \cite[Theorem 1]{T}, the author showed that if there exists $j\in [n]$ such that $a_j\ge \rho_j=\max\{\deg_{j}(u)~|~ u\text{ is a minimal monomial generator of } I\}$ then $\D_\a(I)$ is either a cone over $\{j\}$ or the void complex. 
\end{rem}

\begin{defn}\label{exdef} Let $I$ be a monomial ideal in $S$. A pair $(\a,i) \in \NN^n\times\NN$ is called {\it an extremal exponent of the ideal $I$}, if $\reg(S/I) = |\a| + i$ as in Lemma \ref{Key0}. 
\end{defn}

\begin{rem}\label{rem_extremal_set} We sometime call $\a$ instead of $(\a,i)$ an extremal exponent of $I$. Let $\a$ be an extremal exponent of $I$. Then $x^\a \notin I$ and $\Delta_{\a}(I)$ is not a cone over $t$ with $t\in\supp\a$. In particular, by Remark \ref{rem_T}, $\a$ belongs to the finite set
	$$\Gamma(I)=\{\a\in\NN^n~|~ a_j<\rho_j\text{ for all } j=1,\ldots,n\}.$$ 
\end{rem}

Furthermore, we have the following interpretation of the degree complex $\Delta_\a(I)$.

\begin{lem}\label{Key1}
Let $I$ be a monomial ideal in $S$ and $\a\in\NN^n$. Then
$$I_{\Delta_{\a}(I)}=\sqrt{I : x^\a}.$$
\end{lem}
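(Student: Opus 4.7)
The plan is to verify the set equality by checking that both ideals are squarefree monomial ideals and have exactly the same squarefree monomials as members; for squarefree monomial ideals this suffices since each is generated by its squarefree members.

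First I would unpack what it means for a squarefree monomial $x_F$ (with $F \subseteq [n]$) to lie in $\sqrt{I:x^\a}$. By definition of the radical of a monomial ideal, $x_F \in \sqrt{I:x^\a}$ if and only if $x_F^k \cdot x^\a \in I$ for some $k \ge 1$, equivalently there exist $k \ge 1$ and a minimal monomial generator $x^\b$ of $I$ dividing $x_F^k x^\a$. Examining this divisibility coordinate by coordinate, the condition on $j \in F$ becomes $b_j \le a_j + k$, which is vacuous for $k$ large enough, while the condition on $j \notin F$ becomes $b_j \le a_j$. Hence
\[
x_F \in \sqrt{I:x^\a} \iff \exists \text{ a minimal generator } x^\b \text{ of } I \text{ with } b_j \le a_j \text{ for every } j \notin F.
\]

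Next I would rewrite the membership condition in $\Delta_\a(I)$. Since $\a \in \NN^n$ gives $G_\a = \emptyset$, the definition of the degree complex says
\[
F \in \Delta_\a(I) \iff \text{for every minimal generator } x^\b \text{ of } I, \ \exists\, i \notin F \text{ with } a_i < b_i.
\]
Its negation is exactly: there exists a minimal generator $x^\b$ of $I$ with $b_j \le a_j$ for every $j \notin F$. Comparing with the previous equivalence, I obtain
\[
F \notin \Delta_\a(I) \iff x_F \in \sqrt{I:x^\a}.
\]

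Finally I would conclude. By the Stanley--Reisner correspondence, $I_{\Delta_\a(I)}$ is the squarefree monomial ideal generated by all $x_F$ with $F \notin \Delta_\a(I)$, and a squarefree monomial $x_F$ belongs to $I_{\Delta_\a(I)}$ precisely when $F \notin \Delta_\a(I)$. On the other side, $\sqrt{I:x^\a}$ is a radical (hence squarefree) monomial ideal, so it is determined by which squarefree monomials it contains. The equivalence established in the previous paragraph shows these two squarefree monomial ideals have exactly the same squarefree monomials, and therefore $I_{\Delta_\a(I)} = \sqrt{I:x^\a}$. There is no real obstacle here; the only care needed is the bookkeeping with $G_\a = \emptyset$ and the observation that in the definition of $\sqrt{I:x^\a}$ one may freely take the exponent $k$ large enough to absorb any constraints on coordinates $j \in F$.
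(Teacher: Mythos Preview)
Your argument is correct. The paper does not actually supply a proof of this lemma; it is stated as a known fact (the result goes back to Takayama's description of the degree complex), so there is no ``paper's own proof'' to compare against. Your direct verification---unwinding the definition of $\Delta_\a(I)$ for $\a\in\NN^n$ and matching $F\notin\Delta_\a(I)$ with $x_F\in\sqrt{I:x^\a}$ via the divisibility criterion---is exactly the standard argument and is complete as written.
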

\begin{rem}\label{rem_mingens_degree_complex} Let $\a$ be an extremal exponent of $I$. By Lemma \ref{cone}, Lemma \ref{Key1}, and Remark \ref{rem_extremal_set}, for each $t \in \supp \a$, there exists a minimal generator $f$ of $\sqrt{I:x^\a}$ such that $x_t | f$.
\end{rem}

 We first deduce the following inequality on the regularity of restriction of a monomial ideal.

\begin{lem}\label{restriction_in} Let $I$ be a monomial ideal and $x_j$ is a variable. Then 
$$\reg (I,x_j) \le \reg I.$$
\end{lem}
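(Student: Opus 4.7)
The plan is to apply the Takayama-type formula from Lemma~\ref{Key0} directly to $J=(I,x_j)$ and compare the degree complexes of $J$ with those of $I$.

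First, I would observe that if $(\a,i)\in\NN^n\times\NN$ realizes $\reg(S/J)$ as in Lemma~\ref{Key0}, then necessarily $a_j=0$: the constraint attached to the generator $x_j$ in the definition of $\Delta_\a(J)$ can only be met when $j\notin F$ and $a_j=0$, so otherwise $\Delta_\a(J)$ is void. Assuming $a_j=0$, a routine unpacking of definitions identifies $\Delta_\a(J)$ with the deletion $\Delta_\a(I)\setminus\{j\}$ (the subcomplex of $\Delta_\a(I)$ consisting of faces not containing $j$): whenever $j\notin F$ and $a_j=0$, the constraint imposed by each minimal generator of $I$ divisible by $x_j$ is automatically satisfied at the index $j$, while the constraints from the remaining minimal generators are identical in $I$ and $(I,x_j)$.

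The key step is then a Mayer--Vietoris argument. Fix a face $F\in\Delta_\a(J)$ with $F\cap\supp\a=\emptyset$ and $\h_{i-1}(\lk_{\Delta_\a(J)}F;K)\ne 0$, and set $\Delta:=\lk_{\Delta_\a(I)}F$, so that $\lk_{\Delta_\a(J)}F=\Delta\setminus\{j\}$. Decomposing $\Delta=(\Delta\setminus\{j\})\cup\st_\Delta(\{j\})$ with intersection $\lk_\Delta(\{j\})$, and using that $\st_\Delta(\{j\})$ is a cone over $\{j\}$ hence acyclic, the Mayer--Vietoris exact sequence gives
$$\h_{i-1}(\lk_\Delta(\{j\}))\longrightarrow \h_{i-1}(\Delta\setminus\{j\})\longrightarrow \h_{i-1}(\Delta).$$
The nonvanishing of the middle term forces either $\h_{i-1}(\Delta)\ne 0$ --- in which case $F$ itself shows $|\a|+i\le\reg(S/I)$ via Lemma~\ref{Key0} --- or $\h_{i-1}(\lk_\Delta(\{j\}))=\h_{i-1}(\lk_{\Delta_\a(I)}(F\cup\{j\}))\ne 0$, in which case $F\cup\{j\}\in\Delta_\a(I)$ with $(F\cup\{j\})\cap\supp\a=\emptyset$ (since $a_j=0$) yields the same bound. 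Either way $|\a|+i\le\reg(S/I)$, so $\reg(S/J)\le\reg(S/I)$, and hence $\reg J\le\reg I$.

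The only nontrivial ingredient is the identification of $\Delta_\a(J)$ as a vertex deletion; after that the Mayer--Vietoris step is routine. An alternative route via the short exact sequence $0\to S/(I:x_j)(-1)\xrightarrow{\cdot x_j}S/I\to S/J\to 0$ would reduce the statement to the auxiliary inequality $\reg(I:x_j)\le\reg I$, which does not appear easier to establish than the stated inequality.
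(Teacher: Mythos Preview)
The paper states Lemma~\ref{restriction_in} without proof (the preliminaries section draws on \cite{MNPTV}, and this inequality is presumably treated there as a known fact), so there is nothing to compare your argument against.

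That said, your proof is correct and self-contained. The identification $\Delta_\a(I,x_j)=\Delta_\a(I)\setminus\{j\}$ when $a_j=0$ is exactly right: once $j\notin F$ and $a_j=0$, the generator $x_j$ imposes no further constraint, and any minimal generator of $I$ divisible by $x_j$ has its constraint witnessed at $i=j$, so the remaining constraints coincide with those defining $\Delta_\a(I)$. The Mayer--Vietoris step is clean; you might add one sentence for the degenerate case where $j$ is not a vertex of $\Delta=\lk_{\Delta_\a(I)}F$, in which case $\Delta\setminus\{j\}=\Delta$ and the conclusion is immediate without any exact sequence. With that small addition the argument is complete, and it fits naturally with the paper's degree-complex toolkit.
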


For an exponent $\a \in \ZZ^n$, we denote $\supp(\a) =\{i\in [n] \mid \ a_i \neq 0\}$, the support of $\a$. For any subset $V \subset [n]$, we denote $$I_V = (f~|~  f \text{ is a monomial which belongs to } I \text{ and  } \supp (f) \subseteq V)$$ be the restriction of $I$ on $V$. We have

\begin{cor}\label{restriction_inq} Let $J$ be a monomial ideal in $S$. Let $V \subseteq [n]$. We have
$$\reg (J_V) \le \reg (J).$$
\end{cor}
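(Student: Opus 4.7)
The plan is to reduce the statement to an iterated application of Lemma \ref{restriction_in} by relating $J_V$ to the ideal obtained from $J$ by adjoining all the variables outside $V$. Write $W = [n] \setminus V = \{j_1, \ldots, j_k\}$ and set $\n_W = (x_{j_1}, \ldots, x_{j_k})$.

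The first key observation is the equality $J + \n_W = J_V + \n_W$. Indeed, a minimal monomial generator of $J$ whose support meets $W$ is divisible by some $x_{j_i}$ and hence is redundant modulo $\n_W$, while those with support in $V$ are exactly the minimal generators of $J_V$. The second key observation is that
$$\reg(J_V) = \reg(J_V + \n_W).$$
This holds because the variables $x_{j_1}, \ldots, x_{j_k}$ do not appear in any generator of $J_V$, so they form a regular sequence of linear forms on $S/J_V$; equivalently, $S/J_V \cong (K[V]/J_V)[x_{j_1},\ldots,x_{j_k}]$ is a polynomial extension, so its minimal free resolution over $S$ is obtained by base change from that of $K[V]/J_V$ over $K[V]$, and the Betti numbers (hence the regularity) are unchanged. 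Killing the regular sequence $x_{j_1},\ldots,x_{j_k}$ then preserves the regularity.

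Combining these two observations yields
$$\reg(J_V) = \reg(J_V + \n_W) = \reg(J + \n_W).$$
The proof is completed by invoking Lemma \ref{restriction_in} one variable at a time:
$$\reg(J + \n_W) \le \reg(J + (x_{j_1},\ldots,x_{j_{k-1}})) \le \cdots \le \reg(J + (x_{j_1})) \le \reg(J).$$

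The only subtle point is the identity $\reg(J_V) = \reg(J_V + \n_W)$; everything else is purely formal. This identity is a standard fact about regular sequences of linear forms, so no real obstacle arises, and the corollary follows in a few lines.
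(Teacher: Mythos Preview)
Your proof is correct and follows essentially the same approach as the paper's: both establish $J_V + \n_W = J + \n_W$, use that the variables in $W$ form a regular sequence on $S/J_V$ to identify $\reg(J_V)$ with $\reg(J_V+\n_W)$, and then apply Lemma~\ref{restriction_in}. Your version is simply more detailed in justifying the regular-sequence step.
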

\begin{proof}
 Let $\{t,\ldots,n\} = [n] \setminus V$. Then, $J_V + (x_t,...,x_n) = J + (x_t,...,x_n).$ The conclusion follows from Lemma \ref{restriction_in} and the fact that $x_t,\ldots, x_n$ is a regular sequence with respect to $S/J_V$.
\end{proof}

Consequently, we deduce a lower bound on the regularity of intermediate ideals of edge ideals.

\begin{cor}\label{reg_lower_bound} Let $I = I(G)$ be the edge ideal of a simple graph $G$. Let $J \in \Inter(I^s,I^{(s)})$ be an intermediate ideal. Then $\reg J \ge 2s + \mu(G) - 1$.
\end{cor}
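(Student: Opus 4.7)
The plan is to restrict $J$ to an induced matching of $G$ and apply Corollary \ref{restriction_inq}. Let $M=\{e_1,\ldots,e_\mu\}$ be an induced matching of $G$ of maximum size $\mu=\mu(G)$, and set $V=V(M)\subseteq[n]$. Because $M$ is induced, the induced subgraph $G[V]$ is exactly $M$, so $I_V=I(G[V])$ is a monomial complete intersection of $\mu$ quadrics supported on $\mu$ disjoint pairs of variables.

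The main step I would carry out is the identification $J_V=I(G[V])^s$. First, a direct check gives $(I^s)_V=I(G[V])^s$: any monomial $m\in I^s$ is divisible by a product of $s$ edges of $G$, and if in addition $\supp(m)\subseteq V$ then each such edge must lie in $E(G[V])$; the reverse inclusion is obvious. Second, using the $*$-differential criterion of Lemma \ref{differential_criterion}, if $m\in(I^{(s)})_V$ and $|\a|=s-1$ then $\partial^*m/\partial^*x^\a$ is still supported on $V$ and lies in $I$, hence in $I_V=I(G[V])$; so $(I^{(s)})_V\subseteq I(G[V])^{(s)}$. Because $I(G[V])$ is generated by a monomial regular sequence, its symbolic and ordinary powers coincide, giving $I(G[V])^{(s)}=I(G[V])^s$. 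Restricting the squeeze $I^s\subseteq J\subseteq I^{(s)}$ to $V$ therefore sandwiches $J_V$ between two copies of $I(G[V])^s$, forcing $J_V=I(G[V])^s$.

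To finish I would invoke the standard formula $\reg I(G[V])^s=2s+\mu-1$, valid for any disjoint union of $\mu$ edges; this follows either from the Koszul resolution of the complete intersection (the top Betti degree of $I(G[V])^s$ is $2s+\mu-1$) or from the Beyarslan--Ha--Trung regularity formula specialized to a forest. Corollary \ref{restriction_inq} then delivers
$$\reg J\ \ge\ \reg J_V\ =\ \reg I(G[V])^s\ =\ 2s+\mu(G)-1,$$
as required. The only genuinely subtle point is the collapse $I(G[V])^{(s)}=I(G[V])^s$ used in the identification of $J_V$; everything else is a routine application of the restriction inequality together with a classical regularity computation for a complete intersection of quadrics.
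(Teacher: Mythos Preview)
Your proof is correct and follows essentially the same route as the paper: restrict to the vertex set $V$ of a maximum induced matching, identify $J_V$ with $I(G[V])^s$ via the sandwich $I_V^s \subseteq J_V \subseteq I_V^{(s)}$ and the complete-intersection equality $I_V^{(s)}=I_V^s$, then apply Corollary~\ref{restriction_inq}. The only difference is cosmetic: where the paper cites \cite[Corollary 2.4]{MNPTV} for the restriction identities, you verify $(I^s)_V=I_V^s$ and $(I^{(s)})_V\subseteq I_V^{(s)}$ directly using support considerations and Lemma~\ref{differential_criterion}.
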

\begin{proof} Let $H$ be a maximum induced matching of $G$. Then $I_H^s \subseteq J_H \subseteq I_H^{(s)}$. Since $I_H$ is a complete intersection of $\mu(G)$ quadrics, $I_H^{(s)} = I_H^s$ and $\reg I_H^s = 2s + \mu(G) - 1$. The Corollary follows from Corollary $\ref{restriction_inq}$ and \cite[Corollary 2.4]{MNPTV}.
\end{proof}

\subsection{Radicals of colon ideals} We start with a simple observation.

\begin{lem}\label{radical_colon} Let $I$ be a monomial ideal in $S$ generated by the monomials $f_1, ..., f_r$ and $\a \in \NN^n$. Then $\sqrt{I:x^\a}$ is generated by $\sqrt{f_1/\gcd(f_1, x^\a)}, ..., \sqrt{f_r/\gcd(f_r,x^\a)}$.
\end{lem}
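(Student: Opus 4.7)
The plan is to reduce the statement to two well-known facts about monomial ideals: the formula for the colon of a monomial ideal by a single monomial, and the formula for the radical of a monomial ideal in terms of its generators. Both are elementary, so the proof should be short.

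First, I would establish that
\[ I : x^\a = \bigl(f_1/\gcd(f_1,x^\a),\ldots,f_r/\gcd(f_r,x^\a)\bigr). \]
Since $I$ and $(x^\a)$ are monomial, the colon $I:x^\a$ is again a monomial ideal, so it suffices to characterize its monomial elements. A monomial $g$ lies in $I:x^\a$ iff $gx^\a$ is divisible by some $f_i$, and $f_i \mid gx^\a$ is equivalent to $f_i/\gcd(f_i,x^\a) \mid g$ (by dividing the common factor $\gcd(f_i,x^\a)$ out of both sides). Hence the stated generators generate $I:x^\a$.

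Next, I would invoke the standard fact that if $J$ is a monomial ideal with monomial generators $g_1,\ldots,g_r$, then $\sqrt{J} = (\sqrt{g_1},\ldots,\sqrt{g_r})$. This follows because $\sqrt{J}$ is itself a squarefree monomial ideal, and a squarefree monomial $x_F$ lies in $\sqrt{J}$ iff $x_F^t \in J$ for some $t$, iff some $g_i$ divides a power of $x_F$, iff $\supp(g_i) \subseteq F$, iff $\sqrt{g_i} \mid x_F$. Applying this to $J = I:x^\a$ with the generators from the first step yields the claimed description of $\sqrt{I:x^\a}$.

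Neither step presents a genuine obstacle; the only mild care required is in the first step, to verify that $f_i \mid g x^\a \iff f_i/\gcd(f_i,x^\a) \mid g$ (which is immediate from writing each monomial additively in exponent vectors and using that $\gcd$ corresponds to coordinatewise minimum).
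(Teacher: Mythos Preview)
Your proof is correct. The paper itself gives no proof of this lemma, presenting it only as ``a simple observation,'' and your two-step argument (first computing $I:x^\a$ via the standard colon formula for monomial ideals, then taking radicals generator-wise) is precisely the routine justification one would supply.
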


Assume now that $I = I(G)$ is the edge ideal of a graph, and $\a\in \NN^n$ an exponent. We will give a sufficient condition for a squarefree monomial to belong to the radical $\sqrt{I^s:x^\a}$. It is also necessary for $f$ to be minimal. First, we introduce some notation. Let $f$ be a monomial. The $I$-order of $f$ is defined by 
$$\ord_I(f) = \max (t \mid f \in I^t).$$
From the definition, it is clear that if $g|f$, then $\ord_I(g) \le \ord_I(f)$.

\begin{lem}\label{criterion_in_power}  Let $F$ be an independent set of $G$, and $\a \in \NN^n$ an exponent. Assume that
\begin{equation}
  \sum_{j\in N(F)} a_j + \ord_I(\prod_{u \notin N[F]} x_u^{a_u}) \ge s, \label{eq_in_power} 
\end{equation}
then $x_F \in \sqrt{I^s:x^\a}$. Conversely, if $x_F$ is a minimal generator of $\sqrt{I^s:x^\a}$ then \eqref{eq_in_power} holds.
\end{lem}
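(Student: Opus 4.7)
My plan is to reduce both directions to analyzing, for appropriate $t \ge 1$, an edge-factorization of the monomial $x_F^t x^\a$, using the basic equivalence that $x_F \in \sqrt{I^s:x^\a}$ if and only if $x_F^t x^\a \in I^s$ for some $t \ge 1$. Throughout set $\sigma = \sum_{j \in N(F)} a_j$, $C = V(G) \setminus N[F]$, $N = \prod_{u \in C} x_u^{a_u}$, and $r = \ord_I(N)$. Note that $V(G) = F \sqcup N(F) \sqcup C$ as a disjoint union.

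For the forward direction, assume $\sigma + r \ge s$. For each $j \in N(F)$, fix a vertex $v_j \in F$ adjacent to $j$ (which exists by the definition of $N(F)$), so that $x_j x_{v_j}$ is an edge-generator of $I$; different $j$'s may pick the same $v_j$, so taking $t \ge \sigma$ provides enough copies of each $x_v$. Then the monomial
$$M := \prod_{j \in N(F)} (x_j x_{v_j})^{a_j} \cdot N$$
divides $x_F^t x^\a$: the $x_j^{a_j}$-factors come from $x^\a$, the total demand on each $x_v$ with $v \in F$ is at most $\sigma \le t$, and $N$ involves only variables in $C$. Since $M \in I^{\sigma} \cdot I^r = I^{\sigma + r} \subseteq I^s$, we conclude $x_F^t x^\a \in I^s$, hence $x_F \in \sqrt{I^s:x^\a}$.

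For the converse, pick $t$ with $x_F^t x^\a \in I^s$ and edge-generators $e_1, \ldots, e_s$ of $I$ whose product divides $x_F^t x^\a$. Since $F$ is independent, no edge has both endpoints in $F$, and since any neighbor of an $F$-vertex lies in $N(F)$, each $e_k$ is either \emph{Type B} (one endpoint in $F$, one in $N(F)$) or \emph{Type A} (both endpoints in $N(F) \cup C$). Subdivide Type A by the number of endpoints in $C$, writing $\alpha_1,\alpha_2,\alpha_3$ for the counts with $0,1,2$ endpoints in $C$ respectively, and let $\beta$ be the number of Type B edges, so $\alpha_1 + \alpha_2 + \alpha_3 + \beta = s$. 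Counting $N(F)$-variable multiplicity in $x_F^t x^\a$ (which equals $\sigma$ since $F \cap N(F) = \emptyset$) gives $2\alpha_1 + \alpha_2 + \beta \le \sigma$, while the $\alpha_3$ edges supported entirely on $C$ have product dividing $N$ and so contribute $r \ge \alpha_3$. Adding,
$$\sigma + r \ge (2\alpha_1 + \alpha_2 + \beta) + \alpha_3 = s + \alpha_1 \ge s.$$

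The argument is essentially bookkeeping and I do not anticipate a substantive obstacle. The two points to watch are: (i) accommodating repeated choices of $v_j \in F$ in the forward direction, handled by taking $t \ge \sigma$; and (ii) in the converse, correctly counting $N(F)$-endpoints, where Type A edges lying inside $N(F)$ contribute $2$ each while those crossing from $N(F)$ to $C$ contribute only $1$ — exactly the asymmetry that produces the slack $s + \alpha_1 \ge s$.
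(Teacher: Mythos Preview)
Your proof is correct and follows essentially the same edge-classification argument as the paper: both directions rest on factoring $x_F^t x^\a$ (for suitable $t$) into edge-generators and counting endpoints in $F$, $N(F)$, and $C$. Your types $\beta,\alpha_1,\alpha_2,\alpha_3$ correspond exactly to the paper's $s_1,s_2,s_3,s_4$.

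One small difference is worth highlighting: for the converse, the paper invokes its Lemma on generators of $\sqrt{I:x^\a}$ (the ``radical\_colon'' description) to produce a product $P=e_1\cdots e_s$ with $x_F=\sqrt{P/\gcd(P,x^\a)}$, and this step uses that $x_F$ is a \emph{minimal} generator. You bypass this by working directly with the equivalence $x_F\in\sqrt{I^s:x^\a}\Leftrightarrow x_F^t x^\a\in I^s$ for some $t$, so your counting argument never needs minimality. Consequently you actually establish the full biconditional $x_F\in\sqrt{I^s:x^\a}\Leftrightarrow \sigma+r\ge s$ for any independent set $F$, which is slightly stronger than the lemma as stated. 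This is a clean simplification with no cost.
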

\begin{proof} Let 
$$g = \prod_{j\in N(F)} x_j^{a_j} \text{ and } h = \prod_{u \notin N[F]} x_u^{a_u}.$$
We have $x_F^{\deg g} g \in I^{\deg g}$. Therefore, if $\deg g + \ord_I(h) \ge s$, $x_F^{\deg g} gh \in I^s$.

Conversely, assume that $x_F \in \sqrt{I^s:x^\a}$ is a minimal generator. By Lemma \ref{radical_colon}, there exist edges $e_1, ..., e_s$ of $G$ such that $x_F = \sqrt{P/\gcd(P,x^\a)}$ where $P = e_1 \cdots e_s$. Replacing $x^\a$ by $\gcd(P,x^\a)$, we may assume that there exists exponents $b_i > 0$, $i \in F$ such that 
$$P = (\prod_{i \in F} x_i^{b_i}) \cdot x^\a.$$
Assume that among $e_1, ..., e_s$ there are $s_1$ edges of the form $x_ix_j$, $s_2$ edges of the form $x_jx_k$, $s_3$ edges of the form $x_k x_u$, and $s_4$ edges of the form $x_u x_v$ where $i \in F$, $j,k \in N(F)$ and $u,v \notin N[F]$. Then $s = s_1 + s_2 + s_3 +s_4$ and 
    $$\sum_{j \in N(F)} a_j  =        s_1 + 2s_2 + s_3.$$
Furthermore, $h$ is divisible by the product of $s_4$ edges of the form $x_ux_v$. In particular, $\ord_I(h) \ge s_4$. Thus
    $$\sum_{j\in N(F)} a_j + \ord_I(h) \ge  s_1 + 2s_2 + s_3 + s_4 \ge s,$$
    as required.
\end{proof}

As a consequence, we have

\begin{cor}\label{cor_maximal_independent_term} Let $F$ be a maximal independent set of $G$. Let $J \in (I^s,I^{(s)})$ be an intermediate ideal. Let $\a\in \NN^n$ be an exponent such that $x^\a \notin J$. Then $x_F \in \sqrt{J:x^\a}$ if and only if $x_F \in \sqrt{I^s:x^\a}$.
\end{cor}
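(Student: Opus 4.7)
The plan is to verify the two implications separately; both reduce to the machinery of Lemma \ref{criterion_in_sym} and Lemma \ref{criterion_in_power}, with the crucial use of the maximality of $F$.

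For the ``if'' direction, the containment $I^s \subseteq J$ (which holds because $J \in \Inter(I^s, I^{(s)})$) gives $I^s:x^\a \subseteq J:x^\a$, and taking radicals yields $\sqrt{I^s:x^\a} \subseteq \sqrt{J:x^\a}$. Thus $x_F \in \sqrt{I^s:x^\a}$ forces $x_F \in \sqrt{J:x^\a}$, and this direction does not even need $F$ to be maximal.

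For the ``only if'' direction, suppose $x_F \in \sqrt{J:x^\a}$. Since $F$ is an independent set of $G$, no generator of $I = I(G)$ divides $x_F$, so $x_F \notin I$. Because $F$ is a maximal independent set, it is a facet of the Stanley-Reisner (independence) complex $\Delta(I)$, and of course $\supp(x_F) = F \subseteq F$. Applying Lemma \ref{criterion_in_sym} with $f = x_F$ yields
$$\sum_{i \notin F} a_i \ge s.$$
Now the maximality hypothesis enters decisively: every vertex $v \notin F$ must have a neighbour in $F$ (otherwise $F \cup \{v\}$ would be independent, contradicting maximality), so $[n] \setminus F = N(F)$ and $[n] \setminus N[F] = \emptyset$. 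Consequently $\sum_{j \in N(F)} a_j = \sum_{i \notin F} a_i \ge s$ and $\prod_{u \notin N[F]} x_u^{a_u} = 1$, so the hypothesis \eqref{eq_in_power} of Lemma \ref{criterion_in_power} is satisfied. The sufficient direction of that lemma then gives $x_F \in \sqrt{I^s:x^\a}$, completing the proof.

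There is no real obstacle: the two preceding lemmas do all the work, and the only ingredient to check is the observation that maximal independence forces $N[F] = [n]$ (with isolated vertices, if any, necessarily lying in $F$), which aligns the conclusion of Lemma \ref{criterion_in_sym} with the hypothesis of Lemma \ref{criterion_in_power}.
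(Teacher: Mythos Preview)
Your proof is correct and follows essentially the same route as the paper's: one direction is the trivial containment $\sqrt{I^s:x^\a}\subseteq\sqrt{J:x^\a}$, and the other combines Lemma~\ref{criterion_in_sym} (using that a maximal independent set is a facet of $\Delta(I)$) with Lemma~\ref{criterion_in_power} via the observation $N[F]=[n]$. You have simply made explicit the verifications (e.g.\ $x_F\notin I$, $[n]\setminus F=N(F)$) that the paper leaves implicit.
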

\begin{proof}Since $\sqrt{I^s:x^\a} \subseteq \sqrt{J:x^\a}$, it suffices to prove that if $x_F \in \sqrt{J:x^\a}$ then $x_F \in \sqrt{I^s:x^\a}$. Since $F$ is a maximal independent set $F \cap N(F) = \emptyset$ and $N[F] = F \cup N(F) = [n]$. By Lemma \ref{criterion_in_sym}, 
$$\sum_{j \notin F} a_j \ge s.$$
The conclusion follows from Lemma \ref{criterion_in_power}.
\end{proof}

From Lemma \ref{criterion_in_power}, we see that the computation of order is of particular importance. The order of a clique term, which plays a crucial role in the study of $\Delta_\a(I^s)$ when $\alpha(G) = 2$, can be computed as follows.

\begin{lem}\label{clique_term} Assume that $\{1, ..., t\}$ is a clique of $G$. Let $f = x_1^{a_1} \cdots x_t^{a_t}$. Assume that $a_1 = \max(a_1, ..., a_t)$. Then
$$\ord_I(f) = \min(a_2+ \cdots + a_t, \lfloor (a_1 + \cdots + a_t)/2 \rfloor).$$
\end{lem}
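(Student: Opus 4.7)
The plan is to prove both inequalities in $\ord_I(f) = s^* := \min(U, \lfloor T/2 \rfloor)$, where $T = a_1 + \cdots + a_t$ and $U = T - a_1 = a_2 + \cdots + a_t$. The upper bound follows from degree counting, and the lower bound from an explicit packing of clique-edges whose product divides $f$.

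For the upper bound, suppose $f \in I^s$ and write $f = e_1 \cdots e_s \cdot h$ with each $e_k$ an edge generator of $I$ and $h$ a monomial. Since $\supp(f) \subseteq \{1,\ldots,t\}$ and this set forms a clique, every $e_k$ must be of the form $x_{p_k} x_{q_k}$ with $p_k, q_k \in \{1,\ldots,t\}$ and $p_k \ne q_k$. Comparing total degrees gives $2s \le T$, hence $s \le \lfloor T/2 \rfloor$. Moreover, each $e_k$ has at least one endpoint in $\{2,\ldots,t\}$, so summing contributions of these endpoints to $\deg_{x_i}(e_1 \cdots e_s)$ for $i \ge 2$ yields $s \le U$. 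Therefore $\ord_I(f) \le s^*$.

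For the lower bound I exhibit $s^*$ clique-edges whose product divides $f$, splitting into two cases. If $a_1 \ge U$, then $s^* = U$, and I take, for each $j \in \{2,\ldots,t\}$, exactly $a_j$ copies of the clique edge $\{1,j\}$; vertex $1$ is used $U \le a_1$ times and vertex $j$ exactly $a_j$ times, so the product of these $U$ edges divides $f$. If instead $a_1 \le U$, equivalently $a_1 \le m := \lfloor T/2 \rfloor = s^*$, I arrange $T$ labeled slots in the order $\underbrace{1, \ldots, 1}_{a_1}, \underbrace{2, \ldots, 2}_{a_2}, \ldots, \underbrace{t, \ldots, t}_{a_t}$ and pair slot $k$ with slot $k+m$ for $k = 1, \ldots, m$. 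Each such pair has two distinct labels: otherwise both slots of some pair would lie in a common label-block, which would force that block to have length at least $m+1$, contradicting $a_i \le a_1 \le m$. The $m$ resulting clique-edges use vertex $i$ at most $a_i$ times overall, so their product again divides $f$.

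The only technical subtlety is the pairing argument in the second case: the hypothesis $a_1 \le U$ must be translated to $a_1 \le m$ (immediate since $2a_1 \le T$), after which a simple block-length comparison rules out intra-block pairs. Beyond this, both bounds reduce to straightforward bookkeeping, and matching them yields the claimed formula.
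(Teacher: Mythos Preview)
Your proof is correct and takes a genuinely different route from the paper on both halves. For the upper bound, the paper handles the case $a_1 > U$ via the $*$-differential criterion (Lemma~\ref{differential_criterion}), observing that $\partial^* f/\partial^* x^\b = x_1^{a_1}\notin I$ for $\b=(0,a_2,\ldots,a_t)$, whence $f\notin I^{(r+1)}\supseteq I^{r+1}$; you instead count endpoints of the $e_k$ lying in $\{2,\ldots,t\}$, which gives $s\le U$ directly and avoids any appeal to symbolic powers. For the lower bound, the paper argues by induction on $r$, peeling off a single carefully chosen clique edge at each step; your argument is a one-shot explicit construction, the ``shift by $m$'' pairing in the second case being the standard trick that produces a perfect matching on a multiset whose largest multiplicity is at most half the total. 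Your approach is more self-contained (no dependence on Lemma~\ref{differential_criterion}) and constructive, while the paper's inductive peeling is closer in spirit to how such order computations are reused elsewhere in the manuscript.
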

\begin{proof} We may assume that $a_1 \ge a_2 \ge \cdots \ge a_n$. Let $r = \min(a_2+ \cdots + a_t, \lfloor (a_1 + \cdots + a_t)/2 \rfloor).$ We prove by induction on $r$ that $f \in I^r$. The base case $r = 0$ is obvious.

Assume that $r \ge 1$, then $a_2 \ge 1$. Let $v$ be the largest index such that $a_v > 0$, and $u$ be the largest index such that $a_u = a_1$. In other words, we have $a_1 = \cdots = a_u > a_{u+1} \ge \cdots a_v > a_{v+1} = 0$. If $u = v$, i.e. $a_1 = \cdots = a_v$, then let $\b = (a_1, ...,a_{v-1}-1,a_v-1)$, then $f = x^\b (x_{v-1}x_v)$ and by induction $x^\b \in I^{r-1}$. If $u >v$, let $\b = (a_1, ..., a_{u-1}, a_u-1, a_{u+1}, ...,a_v-1)$. Then $f = x^\b (x_ux_v)$ and $x^\b \in I^{r-1}$ by induction.

We will now show that $f \notin I^{r+1}$. Indeed, if $a_1 \le \sum_{i = 2}^ta_i$, then $\deg f \le 2r + 1$, and thus $f \notin I^{r+1}$. Assume that $a_1 > a_2 + \cdots  + a_t$. Let $\b = (0, a_2, ..., a_t)$. Then $|\b| = r = a_2 + \cdots + a_t$. We have $\partial^* f/ \partial^* x^\b = x_1^{a_1} \notin I$. Thus $f \notin I^{(r+1)}$. In particular $f \notin I^{r+1}$ as required.  
\end{proof}

\section{Proof of Theorem \ref{dim1}}\label{sec_inter_dim1}

Throughout the section, $\Delta$ denotes a one-dimensional simplicial complex, $I = I_\Delta$ the Stanley-Reisner ideal of $\Delta$. Since $\dim \Delta = 1$, $\reg I_\Delta \le 3$ by Lemma \ref{Key0}. Thus, $I$ is generated by squarefree monomials of degree at most three. Furthermore, $I$ has a minimal generator of degree three if and only if $\Delta$ contains a triangle. We first give a simple proof of Theorem \ref{dim1} when $\girth \Delta = 3$.

\begin{lem}\label{degree_bound} Let $\Delta$ be a one-dimensional simplex such that $\girth \Delta = 3$. Let $I = I_\Delta$. Let $\a\in \NN^n$ be an exponent such that $a_i \le s$ for all $i=1, ..., n$. Assume that $|\a| \ge 3s$, then $x^\a \in I^s$.
\end{lem}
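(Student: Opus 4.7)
The plan is to induct on $s$, reducing first to the case $|\a|=3s$: given $\a$ with $|\a|>3s$ and $a_i\le s$, I pick $\a'\le\a$ componentwise with $|\a'|=3s$ (possible since $\sum a_i\ge 3s$), note $a'_i\le a_i\le s$, and obtain $x^\a=x^{\a-\a'}\cdot x^{\a'}\in I^s$ once the $|\a'|=3s$ case is handled. The base $s=1$ is immediate: $|\a|\ge 3$ with $a_i\le 1$ forces $|\supp\a|\ge 3$, and since $\dim\Delta=1$, $\supp\a$ is not a face of $\Delta$, so it contains a minimal non-face of $\Delta$, i.e.\ a generator of $I$ dividing $x^\a$.

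For the inductive step with $|\a|=3s$, set $T=\{i:a_i=s\}$. Then $|T|\le 3$, with $|T|=3$ exactly when $|\supp\a|=3$. In the boundary case $|\supp\a|=|T|=3$, up to reordering $\a=(s,s,s,0,\ldots,0)$: if $\{1,2,3\}$ is a triangle in the $1$-skeleton of $\Delta$, then $x_1x_2x_3$ is a minimal generator of $I$ and $(x_1x_2x_3)^s=x^\a\in I^s$; otherwise some pair, say $\{1,2\}$, is a non-edge of $\Delta$, so $x_1x_2\in I$ and $x^\a=(x_1x_2)^s\cdot x_3^s\in I^s$.

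When $|\supp\a|\ge 4$, $|T|\le 2$, and the strategy is to find a generator $g\in I$ with $g\mid x^\a$ and $T\subseteq\supp g$: then writing $x^\a=g\cdot x^{\a'}$, we get $|\a'|\ge 3s-\deg g\ge 3(s-1)$ and $a'_i\le s-1$ for all $i$, so the inductive hypothesis gives $x^{\a'}\in I^{s-1}$ and hence $x^\a\in I^s$. Such a $g$ exists directly except in two obstruction configurations: (i) $T=\{1\}$ and no generator of $I$ with support in $\supp\a$ contains $x_1$, which forces $\supp\a\setminus\{1\}$ to lie in the $\Delta$-neighborhood of $1$ and to be independent in $\Delta$; or (ii) $T=\{1,2\}$, $\{1,2\}\in\Delta$, and no $k\in S_0:=\supp\a\setminus\{1,2\}$ satisfies $\{1,k\},\{2,k\}\in\Delta$.

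The main obstacle is these obstruction configurations, which I handle by producing all $s$ generators at once instead of peeling one. In (i), every pair in $\supp\a\setminus\{1\}$ is a non-edge of $\Delta$ and hence gives an edge generator of $I$; I realize the degree sequence $(a_k)_{k\ne 1}$ as a loopless multigraph on $\supp\a\setminus\{1\}$ using the Erd\H{o}s--Gallai realizability criterion (a sequence of non-negative integers with even sum is a multigraph degree sequence iff its maximum does not exceed half the sum), noting that the sum is $|\a|-s=2s$ and the maximum is at most $s-1\le s$; the resulting $s$ edges give $s$ generators of $I$ whose product divides $x^\a$. In (ii), write $S_0=A\cup B$ with $A=\{k:\{1,k\}\notin\Delta\}$ and $B=\{k:\{2,k\}\notin\Delta\}$, and for each $k\in S_0$ assign its $a_k$ units as edges $\{1,k\}$ (if $k\in A\setminus B$), $\{2,k\}$ (if $k\in B\setminus A$), or split freely between the two (if $k\in A\cap B$); the total is $\sum_{k\in S_0}a_k=|\a|-2s=s$ edges from $I$, with usage of $x_1$ and $x_2$ each at most $s$, so the product divides $x^\a$. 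Apart from the case analysis, the Erd\H{o}s--Gallai realizability is the only non-standard ingredient.
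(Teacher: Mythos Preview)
Your proof is correct, but it is considerably more involved than the paper's argument. The paper also inducts on $s$, but for the inductive step it simply orders $a_1\ge a_2\ge\cdots\ge a_n$, disposes of the case $a_3=s$ (which forces $a_1=a_2=a_3=s$, hence $(x_1x_2x_3)^s\mid x^\a$), and otherwise peels off the single element $x_1x_2x_3$: since $\dim\Delta=1$, every three-element subset is a non-face, so $x_1x_2x_3\in I$ regardless of whether it is a \emph{minimal} generator. The remaining exponent $\b=(a_1-1,a_2-1,a_3-1,a_4,\ldots,a_n)$ then automatically satisfies $b_i\le s-1$ for all $i$ and $|\b|\ge 3(s-1)$, so induction applies.

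The difference is that you insist on peeling a \emph{minimal} generator containing the tight set $T=\{i:a_i=s\}$, which forces you into the two obstruction configurations and the multigraph realizability argument. None of this is needed once you allow yourself to peel an arbitrary element of $I$ of degree $3$: choosing the three indices of largest exponent guarantees that all exponents drop to at most $s-1$ after peeling, and $x_ix_jx_k\in I$ is automatic from $\dim\Delta=1$. Your approach is not wrong, and the Erd\H{o}s--Gallai step is a nice touch, but the paper's route avoids the case analysis entirely with a one-line observation. (Neither proof actually uses the hypothesis $\girth\Delta=3$; only $\dim\Delta=1$ is needed.)
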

\begin{proof} We prove by induction on $s$. The base case when $s = 1$ follows from the fact that $\dim \Delta = 1$. 

Assume that $s \ge 2$. We may assume that $a_1 \ge a_2 \ge \cdots \ge a_n$. If $a_3 =s$, then $a_1=a_2 = s$ by assumption. Thus $x^\a$ is divisible by $(x_1x_2x_3)^s$. Thus $x^\a \in I^s$. Thus we may assume that $a_i \le s-1$ for all $i \ge 3$. 

Furthermore, since $a_1 + a_2 \le 2s$ and $|\a| \ge 3s$, we deduce that $a_3 \ge 1$. Let $\b=(a_1-1,a_2-1,a_3-1,a_4, ..., a_n)$. Then $b_i \le s-1$ for all $i$ and $|\b| = |\a| - 3 \ge 3(s-1)$. By induction $x^\b \in I^{s-1}$. Therefore, $x^\a = (x_1x_2x_3)x^\b \in I^s$.
\end{proof}

\begin{lem}\label{reg_girth_3} Let $\Delta$ be a one-dimensional simplex such that $\girth \Delta = 3$. Let $I = I_\Delta$. For $s\ge 1$, let $J \in \Inter(I^s,I^{(s)})$ be an intermediate ideal lying between $I^s$ and $I^{(s)}$. Then $\reg J = 3s$. 
\end{lem}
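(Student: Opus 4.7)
The plan is to establish $\reg J = 3s$ by matching lower and upper bounds, both exploiting the triangle in $\D$ guaranteed by $\girth \D = 3$.

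For the lower bound, I would pick vertices $V = \{1,2,3\}$ carrying a triangle of $\D$. Since $\{1,2\}, \{1,3\}, \{2,3\}$ are faces of $\D$ but $\{1,2,3\}$ is not, the only minimal generator of $I = I_\D$ supported on $V$ is $x_1 x_2 x_3$. I would argue that the restriction $J_V$ equals the principal ideal $((x_1 x_2 x_3)^s)$: the inclusion $\supseteq$ is immediate from $(x_1 x_2 x_3)^s \in I^s \subseteq J$, and for the reverse inclusion, I use that $J \subseteq I^{(s)} = \bigcap_F P_F^s$ where $F$ ranges over the facets (edges) of $\D$ and $P_F = (x_i \mid i \notin F)$. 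Testing a monomial $x_1^a x_2^b x_3^c \in J_V$ against $P_{\{1,2\}}, P_{\{1,3\}}, P_{\{2,3\}}$ forces $c,b,a \ge s$, respectively, so this monomial is divisible by $(x_1 x_2 x_3)^s$. Since $\reg J_V = 3s$, Corollary \ref{restriction_inq} gives $\reg J \ge 3s$.

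For the upper bound $\reg J \le 3s$, the plan is to combine Lemma \ref{Key0} with Lemma \ref{degree_bound}. Let $(\a, i)$ be an extremal exponent of $J$ with witness face $F$, so $\reg(S/J) = |\a| + i$, $F \in \D_\a(J)$, $F \cap \supp \a = \emptyset$, and $\h_{i-1}(\lk_{\D_\a(J)} F; K) \ne 0$. Every minimal generator of $J$ has degree at most $s$ in each variable (Lemma \ref{partial_deg_bound_1} for generators of $I^{(s)}$, and directly for generators of $I^s$, which are products of $s$ squarefree monomials of $I$), so Remark \ref{rem_extremal_set} forces $a_j \le s-1$ for all $j$. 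The non-vanishing homology yields a face $G$ of $\lk_{\D_\a(J)} F$ with $|G| = i$; set $H = F \cup G \in \D_\a(J)$ and $\a' = \a + \chi_H$. Then $a'_j \le s$ (since $a_j \le s-1$ and $\chi_H(j) \in \{0,1\}$) and $|\a'| = |\a| + |F| + i$. If $|\a| + i \ge 3s$, then $|\a'| \ge 3s$, and Lemma \ref{degree_bound} yields $x^{\a'} = x_H \cdot x^\a \in I^s \subseteq J$. Hence $x_H \in \sqrt{J:x^\a} = I_{\D_\a(J)}$ by Lemma \ref{Key1}, contradicting $H \in \D_\a(J)$. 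Thus $|\a| + i \le 3s-1$, giving $\reg J \le 3s$.

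I do not foresee a serious obstacle: the argument is a direct application of Lemma \ref{degree_bound} combined with the degree-complex formalism of Section \ref{sec_basic}. The one mildly delicate point is ensuring $\a + \chi_H$ has each coordinate at most $s$ so that Lemma \ref{degree_bound} applies, but the uniform bound $a_j \le s-1$ on extremal exponents together with the $\{0,1\}$-values of $\chi_H$ provide exactly the needed slack.
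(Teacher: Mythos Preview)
Your proof is correct and shares the paper's core ingredients: the lower bound via the cubic generator $(x_1x_2x_3)^s$, and the upper bound via extremal exponents together with Lemma~\ref{degree_bound}. The lower bound arguments are essentially identical (the paper's ``by degree reason'' unpacks to exactly your restriction computation). The upper bounds differ in execution: the paper first bounds $|\a|\le 3s-1$, then treats the cases $|\a|=3s-1$ and $|\a|=3s-2$ separately, showing in each case that $\Delta_\a(J)$ is contained in a complex of small enough dimension to force $i\le 0$ or $i\le 1$. Your argument instead picks a face $H=F\cup G\in\Delta_\a(J)$ of size $|F|+i$, sets $\a'=\a+\chi_H$, and applies Lemma~\ref{degree_bound} directly to $\a'$ to derive a contradiction from $|\a|+i\ge 3s$. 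This is a genuine streamlining: it handles all cases at once and avoids the separate dimension estimates on $\Delta_\a(J)$; the only cost is the (easy) observation that nonvanishing $\h_{i-1}$ guarantees a face of size $i$ in the link.
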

\begin{proof} Since $\girth \Delta = 3$, $I$ has a generator of degree three. By degree reason, $\reg J \ge 3s$. Let $(\a,i)$ be an extremal exponent of $J$. By Remark \ref{rem_extremal_set} and Lemma \ref{partial_deg_bound_1}, $a_i \le s-1$ for all $i = 1, ..., n$ and $x^\a \notin J$. By Lemma \ref{degree_bound}, $|\a| \le 3s-1$. It suffices to show that $|\a| +i \le 3s-1$. By Lemma \ref{cone}, $\Delta_\a(J)$ is a subsimplicial complex of $\Delta$, thus $i \le 2$. It remains to consider the cases where $|\a| = 3s-1$ and $|\a| = 3s-2$. 

If $|\a| = 3s - 1$. By Lemma \ref{degree_bound}, $x_jx^\a \in I^s$, for all $j \in [n]$, i.e. $I^s:x^\a = (x_1,...,x_n)$. Thus $i = 0$ and $\reg J = 3s$. 

If $|\a| = 3s - 2$. By Lemma \ref{degree_bound}, $x_u x_v  x^\a \in I^s$ for all $u \neq v \in [n]$. By Lemma \ref{cone}, $\Delta_\a(J)$ is a subsimplicial complex of the zero-dimensional simplicial complex whose facets are $\{1\}, ..., \{n\}$. Therefore, $i \le 1$, and $|\a| + i \le 3s-1$, as required.
\end{proof}

Now assume that $\girth \Delta \ge 4$. In this case, $I_\Delta$ is the edge ideal of the complement of $\Delta$. Till the rest of the section, we denote $G$ the complement of $\Delta$. Then $G$ has no isolated vertices, $\alpha(G) = 2$ and $I = I(G)$ is the edge ideal of $G$. Before proving Theorem \ref{dim1}, we recall the steps from Introduction for convenient of readers. Fix $s \ge 2$ and $J \in \Inter(I^s,I^{(s)})$ an intermediate ideal. By Corollary \ref{reg_lower_bound}, 
$$\reg J \ge 2s + \mu(G) - 1 = \begin{cases} 2s + 1 & \text{ when } \girth \Delta = 4 \\ 2s & \text{ when } \girth \Delta \ge 5.\end{cases}$$
Let $(\a,i)$ be an extremal exponent of $J$. Assume that $\ord_I(x^\a) = r$. A decomposition $x^\a = MN$ is called a good decomposition if $M$ is a minimal monomial generator of $I^r$, and $N \notin I$. An index $i$ where $x_i$ is of highest exponent in $N$ is called a critical index of $\a$ with respect to this decomposition. The steps to prove the required upper bounds for $|\a| + i$ are:

\begin{enumerate}
    \item We prove that if $i = 2$, then $|\a| \le (\girth \Delta)/(\girth \Delta - 2)(s-1)$, reducing the problem to consider extremal exponents $(\a,i)$ where $|\a| \ge 2s-1$ and $i \le 1$.
    \item Assuming $|\a| \ge 2s-1$, we prove that $\Delta_\a(J) = \Delta_\a(I^s)$.
    \item Assuming $|\a| \ge 2s-1$, let $x^\a = MN$ be a good decomposition of $x^\a$. Let $i$ be a critical index of $N$ with respect to this decomposition. We prove that if $x_i^2|N$, then $\Delta_\a(I^s)$ is a cone over $i$. Consequently, this implies that $|\a| \le 2s$, and if $|\a| = 2s$, then $N$ is squarefree and $\deg N = 2$.
    \item Assuming $|\a| = 2s$, we prove that $\girth \Delta = 4$ and $i = 0$. This completes the case $\girth \Delta = 4$.
    \item Assuming $\girth \Delta \ge 5$ and $|\a| = 2s-1$, we prove that $i = 0$.
\end{enumerate}

We fix the following notation throughout the rest of the section. Let $\Delta$ be a one-dimensional simplicial complex with $\girth \Delta \ge 4$. Let $G$ be the complement of $\Delta$. Let $I = I_\Delta = I(G)$, and $J \in \Inter(I^s,I^{(s)})$ an intermediate ideal.

We now proceed to the first step to bound the degree of extremal exponents $(\a,i)$ of $J$ when $i = 2$.

\begin{lem}\label{h1_bounds} Assume that $(\a,2)$ is an extremal exponent of $J$. Let $\ell$ be the length of a smallest cycle in $\Delta_\a(J)$. Then 
$$|\a| \le \ell (s-1)/(\ell -2).$$
In particular, if $\girth \Delta =4$, then $|\a| \le 2s-2$, and if $\girth \Delta \ge 5$, then $|\a| \le 2s-3$.
\end{lem}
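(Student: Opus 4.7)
The plan is to locate a cycle inside the degree complex $\Delta_\a(J)$ and then exploit the triangle-freeness of $\Delta$ to convert it into a tight inequality on the entries of $\a$. First, since $J \supseteq I$ and $I$ is squarefree, $I \subseteq \sqrt{J:x^\a}$, so by Lemma \ref{cone}(2) together with Lemma \ref{Key1} we have $\Delta_\a(J) \subseteq \Delta$, and in particular $\dim \Delta_\a(J) \le 1$. Consequently, for any face $F$ with $|F| \ge 1$ the link $\lk_{\Delta_\a(J)} F$ has dimension at most $0$, so $\tilde H_1$ of the link vanishes. Since $(\a,2)$ is extremal, the face detecting $\tilde H_1$ must be $F = \emptyset$, and $\tilde H_1(\Delta_\a(J)) \ne 0$ means that $\Delta_\a(J)$, viewed as a graph, contains a cycle. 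Let $C = v_1 v_2 \cdots v_\ell$ (indices modulo $\ell$) be a shortest cycle in $\Delta_\a(J)$; then $\ell \ge \girth \Delta$.

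For each consecutive pair $\{v_i, v_{i+1}\}$ of $C$, the monomial $x_{v_i} x_{v_{i+1}}$ is not in $\sqrt{J:x^\a}$, hence not in $\sqrt{I^s:x^\a}$. Since $\{v_i, v_{i+1}\} \in \Delta$ is a non-edge of $G$, it is an independent set of $G$, so the contrapositive of Lemma \ref{criterion_in_power} gives
$$\sum_{j \in N_G(\{v_i,v_{i+1}\})} a_j + \ord_I\Bigl(\prod_{u \notin N_G[\{v_i,v_{i+1}\}]} x_u^{a_u}\Bigr) \le s - 1.$$
The key observation is that $[n] \setminus N_G[\{v_i, v_{i+1}\}] = \emptyset$: any such $u$ (necessarily distinct from $v_i, v_{i+1}$) would be $\Delta$-adjacent to both $v_i$ and $v_{i+1}$, and together with the $\Delta$-edge $\{v_i, v_{i+1}\}$ this forms a triangle in $\Delta$, contradicting $\girth \Delta \ge 4$. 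The order term therefore vanishes, and the inequality collapses to
$$a_{v_i} + a_{v_{i+1}} \ge |\a| - s + 1.$$

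Summing these $\ell$ inequalities gives $2 \sum_{v \in C} a_v \ge \ell(|\a| - s + 1)$, and since $\sum_{v \in C} a_v \le |\a|$, we deduce $2|\a| \ge \ell(|\a| - s + 1)$, which rearranges to the desired $|\a|(\ell - 2) \le \ell(s - 1)$. For the particular cases, note that $\ell \mapsto \ell(s-1)/(\ell-2)$ is decreasing in $\ell$. When $\girth \Delta = 4$, the maximum value (at $\ell = 4$) is $2(s-1) = 2s-2$; when $\girth \Delta \ge 5$, the bound $|\a| \le 5(s-1)/3$ together with integrality of $|\a|$ and the standing hypothesis $s \ge 2$ yields $|\a| \le 2s-3$ by a direct check ($s=2,3,4$ match equality after flooring, and for $s\ge 5$ one has $5(s-1)/3 \le 2s-3$). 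The only conceptually delicate step is the triangle-free observation that kills the order term; the rest is routine bookkeeping.
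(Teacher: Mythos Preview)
Your proof is correct and follows essentially the same approach as the paper: locate a shortest cycle in $\Delta_\a(J)$, use Lemma~\ref{criterion_in_power} on each edge of the cycle to obtain $|\a|-a_{v_i}-a_{v_{i+1}}\le s-1$, and sum. You supply more detail than the paper (justifying why $\tilde H_1(\Delta_\a(J))\neq 0$ forces a cycle, and verifying the arithmetic for the ``in particular'' clause). One small slip: you write ``since $J \supseteq I$,'' but in fact $J \supseteq I^s$, not $I$; the correct justification of $I \subseteq \sqrt{J:x^\a}$ is simply $I=\sqrt{I^s}\subseteq\sqrt{J}\subseteq\sqrt{J:x^\a}$.
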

\begin{proof}
    Let $1\cdots \ell$ be a cycle in $\Delta_\a(J)$. Then we have $x_ix_{i+1} \notin \sqrt{J:x^\a}$. In particular $x_ix_{i+1} \notin I$. But then for every $t \notin \{i,i+1\}$, $t$ is adjacent to either $i$ or $i+1$ as $\Delta$ is of dimension $1$. By Lemma \ref{criterion_in_power}, we have 
    $$|\a| - a_i - a_{i+1} \le s-1.$$
    Summing over all $\ell$ edges we deduce that 
    $$(\ell-2) |\a| \le \ell(s-1).$$
    In particular, 
    $$|\a| \le \frac{\ell (s-1)}{\ell-2}.$$
    By Lemma \ref{cone}, and the fact that $I \subseteq \sqrt{J:x^\a}$, $\Delta_\a(J)$ is a subsimplicial complex of $\Delta$. Thus $\ell \ge \girth \Delta$, and the last past follows immediately.
\end{proof}

To accomplish step 2, we also need the following property of minimal generators of symbolic powers. 

\begin{lem}\label{partial_deg_bound_2} Let $G$ be a simple graph such that $\alpha(G) = 2$. Let $I = I(G)$ be the edge ideal of $G$. Let $f=x^\a$ be a minimal monomial generator of $I^{(s)}$. Then 
$$a_i \le \sum_{j\in N(i)} a_j.$$
\end{lem}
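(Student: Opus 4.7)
The plan is to argue by contradiction. Assume $a_i > \sum_{j \in N(i)} a_j$ for some index $i$; after relabeling take $i = 1$, so $a_1 \ge 1$. I will show that the proper divisor $g := f/x_1$ still lies in $I^{(s)}$, contradicting the minimality of $f$.

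The cleanest criterion to work with is the standard description $I^{(s)} = \bigcap_F P_F^s$, where $F$ ranges over the maximal independent sets of $G$ and $P_F = (x_j : j \notin F)$; this is a restatement of Lemma \ref{differential_criterion}. Under it, a monomial $x^\b$ lies in $I^{(s)}$ iff $\sum_{j \notin F} b_j \ge s$ for every maximal independent set $F$. Writing $g = x^{\a - \e_1}$, the condition $g \in I^{(s)}$ reads $\sum_{j \notin F} a_j \ge s$ when $1 \in F$, and $\sum_{j \notin F} a_j \ge s + 1$ when $1 \notin F$. The former is immediate from $f \in I^{(s)}$; the entire task is to prove the strict version $\sum_{j \notin F} a_j \ge s+1$ whenever $F \not\ni 1$, and since $f \in I^{(s)}$ already gives $\ge s$, it suffices to rule out the borderline equality $\sum_{j \notin F} a_j = s$.

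The structural inputs from $\alpha(G) = 2$ are: every maximal independent set has size at most $2$, and the set $T := V(G) \setminus N[1]$ is a clique (else a non-edge inside $T$ together with $1$ would form an independent triple of vertices). The clique structure of $T$ gives the key corollary that $\{1, t\}$ is maximal independent for every $t \in T$, and when $T = \emptyset$ the vertex $1$ is universal so $\{1\}$ is itself maximal independent. Either way a maximal independent set $F_1$ containing $1$ exists, and applying $f \in I^{(s)}$ to $F_1$ yields a sharp upper bound on $a_1$ (or on $a_1 + a_t$). I then split the forbidden case on $|F|$. If $F = \{v\}$ is a singleton then $v$ is universal, so $v \in N(1)$ and $a_v = |\a| - s \le \sum_{j \in N(1)} a_j < a_1$, which immediately contradicts the bound from $F_1$. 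If $F = \{u, v\}$ with $u, v \ne 1$ and $uv \notin E$, the clique structure of $T$ prevents both $u, v$ from lying in $T$, so either both lie in $N(1)$ or exactly one does; in the former subcase $\sum_{j \in N(1)} a_j \ge a_u + a_v = |\a| - s$ forces $a_1 > |\a| - s$, once again contradicting the $F_1$ bound.

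The subtlest subcase is $F = \{u, v\}$ with $u \in N(1)$ and $v \in T$, and I expect this to be the principal technical obstacle. The trick is to choose $F_1 := \{1, v\}$ rather than $\{1, t\}$ with a generic $t \in T$: this specific $F_1$ is maximal independent precisely because $v \in T$, and it is the only choice that isolates $a_u$ on the right of the resulting bound. Explicitly, $f \in I^{(s)}$ applied to $F_1 = \{1, v\}$ gives $a_1 + a_v \le |\a| - s = a_u + a_v$, hence $a_1 \le a_u$, which contradicts $a_1 > \sum_{j \in N(1)} a_j \ge a_u$. Once this subcase is handled the contradiction is complete, so $g \in I^{(s)}$, finishing the proof.
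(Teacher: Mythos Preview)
Your proof is correct. Both your argument and the paper's argue by contradiction and aim to show that $g=f/x_1$ already lies in $I^{(s)}$, but the routes to that membership differ. The paper factors $g$ explicitly: writing $N(1)=\{2,\dots,t\}$ and $h=\prod_{j>t}x_j^{a_j}$, it uses $a_1-1\ge\sum_{j\in N(1)}a_j$ to place $x_1^{a_1-1}\prod_{j\in N(1)}x_j^{a_j}$ in $I^{s_1}$ with $s_1=\sum_{j\in N(1)}a_j$, shows $h\in I^{(s_2)}$ with $s_2=\sum_{j>t+1}a_j$ (taking $a_{t+1}$ maximal on the clique $T$), and then invokes Lemma~\ref{criterion_in_sym} applied to $x_1\in\sqrt{I^{(s)}:g}$ and the facet $\{1,t+1\}$ to get $s_1+s_2\ge s$. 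You instead verify $g\in P_F^s$ prime by prime, case-splitting on whether a maximal independent set $F\not\ni 1$ is a singleton, lies inside $N(1)$, or straddles $N(1)$ and $T$, each time playing the assumed equality $\sum_{j\notin F}a_j=s$ against the inequality coming from a maximal independent set $F_1\ni 1$. Your approach is self-contained and avoids the auxiliary Lemma~\ref{criterion_in_sym} and the factorization trick, at the price of a short case analysis; the paper's approach is more uniform once those tools are in hand. One minor remark: the description $I^{(s)}=\bigcap_F P_F^s$ you use is the definition of symbolic powers in Section~\ref{sec_basic} rather than a restatement of Lemma~\ref{differential_criterion}, so you may want to adjust that citation.
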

\begin{proof} We may assume that $i = 1$ and $N(1) = \{2, ..., t\}$. Assume by contrary that $a_1  > a_2 + \cdots + a_t$. Let $g=f/x_1$. Then $g \notin I^{(s)}$ and $x_1 \in \sqrt{I^{(s)}:g}$. Let $h = x_{t+1}^{a_{t+1}} \cdots x_n^{a_n}$ and we may assume that $a_{t+1} \ge \cdots \ge a_n$. Since $\{t+1, ..., n\}$ forms a clique in $G$, $h \in I^{(s_2)}$, where $s_2 = a_{t+2} + \cdots + a_n$. By Lemma \ref{criterion_in_sym}, $\sum_{i\neq 1, t+1} a_i  \ge s$. Now $x_1^{a_1-1} x_2^{a_2} \cdots x_t^{a_t} \in I^{s_1}$ where $s_1 = a_2 + \cdots + a_t$ as $a_1 - 1 \ge a_2 + \cdots + a_t$. Therefore, $g \in I^{(s_1 + s_2)} \subseteq I^{(s)}$, which is a contradiction.
\end{proof}

We are now ready for step 2.

\begin{lem}\label{intermediate_reduction} Let $\a$ be an exponent such that $x^\a \notin J$. Assume that $|\a| \ge 2s-1$, then $\Delta_\a(J) = \Delta_\a(I^s)$.
\end{lem}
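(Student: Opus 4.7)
The inclusion $\Delta_\a(J)\subseteq\Delta_\a(I^s)$, equivalently $\sqrt{I^s:x^\a}\subseteq\sqrt{J:x^\a}$, is immediate from $I^s\subseteq J$, so I concentrate on the reverse inclusion. Writing $J=I^s+(f_1,\ldots,f_t)$ with each $f_j$ a minimal generator of $I^{(s)}$, Lemma \ref{radical_colon} reduces the task to the following: for every chosen $f=f_j$, writing $f=\prod_l x_l^{c_l}$ and setting $F:=\{l:c_l>a_l\}$, show $x_F\in\sqrt{I^s:x^\a}$. Note $F\ne\emptyset$, since otherwise $f\mid x^\a$ would force $x^\a\in J$.

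If $F$ contains an edge of $G$, then $x_F$ is divisible by a generator of $I\subseteq\sqrt{I^s:x^\a}$ and we are done. Otherwise $F$ is independent in $G$, so $|F|\le\alpha(G)=2$. When $|F|=2$, $F$ is a facet of $\Delta$, Lemma \ref{criterion_in_sym} gives $\sum_{l\notin F}a_l\ge s$, and because $N[F]=[n]$ this is exactly the hypothesis of Lemma \ref{criterion_in_power}, yielding $x_F\in\sqrt{I^s:x^\a}$. The crux is therefore the case $F=\{i\}$, where $c_i\ge a_i+1$ and $c_k\le a_k$ for every $k\ne i$.

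Set $B=\sum_{j\in N(i)}a_j$, $T=[n]\setminus N[i]$, $A=\sum_{u\in T}a_u$, and $a_{u_1}=\max_{u\in T}a_u$. Since $\alpha(G)=2$, $T$ is a clique of $G$, so Lemma \ref{clique_term} gives $\ord_I(\prod_{u\in T}x_u^{a_u})=\min(A-a_{u_1},\lfloor A/2\rfloor)$. Applying Lemma \ref{criterion_in_sym} to $x_i\in\sqrt{J:x^\a}$ at the facet $\{i,u_1\}$ (or at $\{i\}$ itself when $T=\emptyset$ and $i$ is dominating) yields $B+A-a_{u_1}\ge s$, and combined with Lemma \ref{criterion_in_power} this already settles both $T=\emptyset$ and the subcase $a_{u_1}\ge\lceil A/2\rceil$.

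The remaining, and decisive, subcase is $a_{u_1}<\lceil A/2\rceil$, in which the order collapses to $\lfloor A/2\rfloor$ and the bound $B+A-a_{u_1}\ge s$ is too weak. Here I would invoke Lemma \ref{partial_deg_bound_2} applied to the minimal generator $f$: $c_i\le\sum_{j\in N(i)}c_j\le B$. Combined with $c_i\ge a_i+1$ this yields $a_i\le B-1$, so $|\a|=a_i+B+A\le 2B+A-1$. The hypothesis $|\a|\ge 2s-1$ then forces $2B+A\ge 2s$. If $A$ is even this is already $B+\lfloor A/2\rfloor=(2B+A)/2\ge s$; if $A$ is odd then $2B+A$ is itself odd, so $2B+A\ge 2s$ strengthens to $2B+A\ge 2s+1$, whence $B+\lfloor A/2\rfloor=(2B+A-1)/2\ge s$. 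Lemma \ref{criterion_in_power} now gives $x_i\in\sqrt{I^s:x^\a}$, completing the proof. The main obstacle is precisely this final subcase: the symbolic-power inequality alone is insufficient, and one must exploit the minimal-generator bound of Lemma \ref{partial_deg_bound_2} together with the degree hypothesis $|\a|\ge 2s-1$ to bridge the gap.
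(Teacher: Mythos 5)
Your proof is correct and follows essentially the same route as the paper's: both reduce via Lemma \ref{radical_colon} to a single variable $x_i$, combine Lemma \ref{criterion_in_sym} at the facet $\{i,u_1\}$ with Lemma \ref{clique_term} and Lemma \ref{criterion_in_power}, and close the gap using Lemma \ref{partial_deg_bound_2} to get $a_i < \sum_{j\in N(i)}a_j$. The only differences are presentational (you argue directly rather than by contradiction, and you make explicit the case split on which branch of the $\min$ in Lemma \ref{clique_term} occurs, which the paper leaves implicit).
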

\begin{proof} Assume that $\sqrt{J:x^\a} \neq \sqrt{I^s:x^\a}$. Let $g$ be a minimal generator of $\sqrt{J:x^\a}$ such that $g \notin \sqrt{I^s:x^\a}$. Therefore, $g \notin I$. By Corollary \ref{cor_maximal_independent_term} and the fact that $\dim \Delta = 1$, we deduce that $\deg g= 1$. We may assume that $g=x_1$. Let $\{2,3,...,t\}$ be the set of neighbors of $1$. Assume that $a_{t+1} \ge a_{t+2} \ge \cdots \ge a_n$. By Lemma \ref{criterion_in_sym}, 
	$$ a_2 + \cdots + a_t + a_{t+2} + \cdots + a_n \ge s.$$
	Since $x_1 \notin \sqrt{I^s:x^\a}$, by Lemma \ref{criterion_in_power} and Lemma \ref{clique_term}, we deduce that 
	$$ a_2 + a_3 + \cdots +a_t + \lfloor (a_{t+1} + \cdots + a_n)/2 \rfloor \le s-1.$$
	Therefore, 
	$$2(a_2 + a_3 + \cdots  + a_t) + (a_{t+1} + \cdots + a_n) \le 2s-1.$$
	By Lemma \ref{radical_colon}, there exists a minimal generator $P = x^\b$ of $I^{(s)}$ such that $x_1 = \sqrt{P/\gcd(P,x^\a)}$. Thus, $b_1 > a_1$ and $b_i \le a_i$ for all $i = 2, ..., n$. By Lemma \ref{partial_deg_bound_2}, 
	$$a_1 < b_1 \le \sum_{i\in N(1)} b_i \le \sum_{i\in N(1)}a_i.$$	Therefore,
	$$|\a| < 2(a_2 + a_3 + \cdots  + a_t) + (a_{t+1} + \cdots + a_n) \le 2s-1,$$ 
	a contradiction.
\end{proof} 

\begin{rem} In general, $\Delta_\a(J) \neq \Delta_\a(I^s)$ when $|\a| \le 2s-2$. For an example, let $I = (x_1x_2,x_1x_3,x_2x_3,x_3x_4,x_4x_5)$ and $x^\a = x_1x_2x_3x_4$. Then $x_5 \in \sqrt{I^{(3)} : x^\a}$, but $x_5 \notin \sqrt{I^3:x^\a}.$
\end{rem}

Steps 3, 4, 5 are done in the next three lemmas.

\begin{lem}\label{lem_cone} Let $\a \in \NN^n$ be an exponent such that $\ord_I(x^\a) = r < s$ and $|\a| \ge 2s-1$. Let $x^\a = MN$ be a good representation of $x^\a$. Let $i$ be a critical index of $N$ such that $x_i^2 |N$, then $\Delta_\a(I^s)$ is a cone over $i$.
\end{lem}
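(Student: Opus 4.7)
The plan is to apply Lemma~\ref{cone}(1): $\Delta_\a(I^s)$ is a cone over $i$ iff $x_i$ does not divide any minimal generator of $\sqrt{I^s:x^\a}$. Since $\alpha(G)=2$, the non-neighbours of $i$ form a clique $C:=[n]\setminus N_G[i]$, the only independent sets of $G$ containing $i$ are $\{i\}$ or $\{i,k'\}$ with $k'\in C$, and the fact that $N\notin I$ forces $N=x_i^{n_i}x_k^{n_k}$ with $k\in C$ (possibly $n_k=0$) and $n_i\ge 2$. Hence the cone condition reduces to proving (a) $x_i\notin\sqrt{I^s:x^\a}$ and (b) $x_ix_{k'}\in\sqrt{I^s:x^\a}\Longrightarrow x_{k'}\in\sqrt{I^s:x^\a}$ for each $k'\in C$.

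The first step is to exploit the two spare copies of $x_i$ in $N$ via swap arguments on the edges of $M$, classified as Type~I $\{i,u\}$ with $u\in A:=N_G(i)$, Type~II $\{u,v\}\subseteq A$, Type~III $\{u,c\}$ with $u\in A, c\in C$, and Type~IV $\{c,c'\}\subseteq C$, with counts $s_1,s_2,s_3,s_4$ summing to $r$. A Type~II edge $\{u,v\}$ can be replaced by $\{i,u\},\{i,v\}$, gaining a matched edge in $x^\a=MN$ and contradicting $\ord_I(x^\a)=r$, so $s_2=0$. Two Type~III edges with distinct $C$-endpoints $c_1\ne c_2$ can be swapped for $\{i,u_1\},\{i,u_2\},\{c_1,c_2\}$ (the last being an edge since $C$ is a clique), so either $s_3\le 1$ or all Type~III edges share a common $c^*\in C$. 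A third swap then shows that, in the shared-$c^*$ situation, every Type~IV edge must also touch $c^*$.

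For~(a), Lemma~\ref{criterion_in_power} translates $x_i\in\sqrt{I^s:x^\a}$ into $\sigma+\tau\ge s$, with $\sigma=\sum_{j\in A}a_j$ and $\tau=\ord_I(\prod_{u\in C}x_u^{a_u})$ computed via Lemma~\ref{clique_term} on the clique $C$. If $\sigma\le a_i$ (covering $s_3\le 1$ since then $\sigma\le s_1+1\le s_1+n_i=a_i$), pairing each $x_j$ ($j\in A$) with an $x_i$ and appending a $\tau$-matching in $C$ yields $x^\a\in I^{\sigma+\tau}$, so $\sigma+\tau\le r<s$. If $\sigma>a_i$, which forces $s_3>n_i$ and hence the shared-$c^*$ configuration from the previous step, I will first observe that $\sigma+\tau\ge s$ forces $c^*\ne k$ and $n_k\ge 1$ (otherwise $\tau\le s_4$ and $\sigma+\tau\le r$); then, replacing $n_i$ Type~III edges by $n_i$ Type~I edges using $x_i^{n_i}$ from $N$, and constructing via Lemma~\ref{clique_term} a Type~IV matching of size $s_4+n_k$ in the residual $\prod_c x_c^{b_c}$ (where $\max b_c=s_4+n_i$ and $\sum b_c=2s_4+n_i+n_k$) produces $r+n_k\ge r+1$ disjoint edges in $x^\a$, contradicting $\ord_I(x^\a)=r$.

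For~(b), Lemma~\ref{criterion_in_power} turns the hypothesis into $\alpha+\beta+\gamma\ge s$ where $\alpha=\sum_{u\in A'}a_u$, $\beta=\sum_{u\in A\cap N_G(k')}a_u$, $\gamma=\sum_{u\in C\setminus\{k'\}}a_u$, $A'=A\setminus N_G(k')$, and the conclusion into $\beta+\gamma+\ord\ge s$ with $\ord=\ord_I(x_i^{a_i}\prod_{u\in A'}x_u^{a_u})$. A second invocation of $\alpha(G)=2$, applied to any $u,u'\in A'$ together with $k'$, shows $A'$ is a clique in $G$, so $\{i\}\cup A'$ is a clique and Lemma~\ref{clique_term} applies. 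Writing $a_u=m_u^I+m_u^{III}$ for $u\in A$, I will check that $\alpha\le a_i$ in the sub-cases $k'=c^*$ (Type~III endpoints lie in $N_G(c^*)\cap A\subseteq A\setminus A'$, so $\alpha\le s_1\le a_i$) and $s_3\le n_i$ (so $\alpha\le s_1+s_3\le s_1+n_i=a_i$); in both, $\ord=\alpha$ and the conclusion is immediate from the hypothesis. The residual sub-case, $k'\ne c^*$ and $s_3>n_i$, is the hard part: the deficit $\alpha-\ord=\max(\max_{u\in A'} a_u-a_i,\lceil(\alpha-a_i)/2\rceil)\le s_3-n_i$ must be bounded by the hypothesis's slack $(|\a|-a_i-a_{k'})-s$, using $|\a|\ge 2s-1$, $a_i=s_1+n_i$, $a_{k'}\le s_4+[k'=k]\,n_k$, and $r\le s-1$. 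The principal obstacle is closing this slack inequality in all configurations of $k$ versus $k'$ and $c^*$.
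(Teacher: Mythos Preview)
Your direct, non-inductive route is different from the paper's proof, which proceeds by induction on $s$: the paper peels off one edge $e_1$ from $M$, sets $x^\b=x^\a/e_1$, and invokes the inductive hypothesis that $\Delta_\b(I^{s-1})$ is a cone over $i$ to derive contradictions in each of the two cases $f=x_1$ and $f=x_1x_{t+1}$. Your approach instead classifies the edges of $M$ by type and exploits the two spare copies of $x_i$ in $N$ through swap identities, then computes both sides of Lemma~\ref{criterion_in_power} explicitly via Lemma~\ref{clique_term}. Both approaches ultimately rely on the same two lemmas, but yours gives more structural information about the decomposition $M$ (e.g.\ $s_2=0$, the shared $c^*$), at the cost of a longer case analysis; the paper's induction is shorter but less transparent about where the hypothesis $|\a|\ge 2s-1$ is actually used.

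Your argument for (a) is complete and correct. The genuine gap is in (b), where you yourself flag ``the principal obstacle is closing this slack inequality in all configurations.'' In fact this closes routinely once you write everything in terms of $s_1,s_3,s_4,n_i,n_k$. With $s_2=0$ and the shared-$c^*$/Type~IV structure (valid since $s_3>n_i\ge2$), one has $|\a|=2r+n_i+n_k$, $a_i=s_1+n_i$, and $a_{k'}=s_4^{(k')}+[k'=k]\,n_k$, so the slack equals
\[
s_1+2s_3+2s_4+[k'\neq k]\,n_k-s_4^{(k')}-s.
\]
Since the deficit is at most $s_3-n_i$ (as you observed, both $\max_{u\in A'}a_u-a_i$ and $\lceil(\alpha-a_i)/2\rceil$ are bounded by $s_3-n_i$), it suffices to show $r+(s_4-s_4^{(k')})+n_i+[k'\neq k]\,n_k\ge s$. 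From $|\a|\ge 2s-1$ one gets $n_i+n_k\ge 2(s-r)-1$; together with $n_i\ge n_k$ this gives $n_i\ge s-r$, which handles $k'=k$, while $n_i+n_k\ge 2(s-r)-1\ge s-r$ handles $k'\neq k$. So the residual sub-case does go through, and your proof can be completed without new ideas; but as written it stops short of the finish.
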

\begin{proof} For simplicity of notation, assume that $i = 1$ is the critical index of $N$ such that $x_1^2|N$. We will prove by induction on $s$ that $\Delta_\a(I^s)$ is a cone over $1$. For the base case $s \le 2$, since $a_1 \ge 2$, by Remark \ref{rem_extremal_set}, $\Delta_\a(I)$ is a cone over $1$.

Assume that $s \ge 3$. Since $\deg N \ge 2(s-r) - 1$, $1$ is a critical index of $N$, and $|\supp N| \le 2$, we deduce that $b_1 \ge (s-r)$ where $b_1$ is the exponent of $1$ in $N$. Since $\sqrt{I^s:x^\a} \supseteq \sqrt{I^{s-r}:N}$, we deduce that $x_j \in \sqrt{I^s:x^\a}$ for all $j \in N(1)$. 

Assume that $N(1) = \{2, ..., t\}$. Since $\alpha(G)=2$, $\{t+1, ..., n\}$ forms a clique in $G$. Since $a_1 \le s-1$, $r \ge 1$. Write $M = e_1 \cdots e_r$ where $e_i$ are edges in $G$. Note that none of the edges $e_i$ is of the form $x_i x_j$ where $i,j \in \{2, ..., t\}$, as otherwise $x_1^2x_ix_j \in I^2$, and thus $x^\a \in I^{r+1}$, a contradiction. 

Assume by contradiction that $\Delta_\a(I^s)$ is not a cone over $1$. By Lemma \ref{cone}, there exists a minimal generator $f \in \sqrt{I^s:x^\a}$ such that $x_1 | f$. Since $\alpha(G) = 2$, $\deg f \le 2$. There are two cases: 

{\noindent \bf Case 1.} $\deg f = 2$. Since $x_j \in \sqrt{I^s:x^\a}$, for all $j \in N(1)$, we may assume that $f = x_1 x_{t+1}$. By Lemma \ref{criterion_in_power}, $\sum_{i \neq 1, t+1} a_i \ge s$. 

If one of the edges $e_i$ is of the form $x_1x_i$ or $x_{t+1}x_j$, say $e_1$. Let $x^\b = e_2 \cdots e_r N$. Then $\sum_{i \neq 1, t+1} b_i \ge s-1$. By Lemma \ref{criterion_in_power}, $f\in \sqrt{I^{s-1}:x^\b}$. By induction $\Delta_\b(I^{s-1})$ is a cone over $1$, thus $x_{t+1} \in \sqrt{I^{s-1}:x^\b} \subseteq \sqrt{I^s:x^\a}$, which is a contradiction.

If $\supp M \subseteq \{t+2, ..., n\}$, then $\sum_{j\in N(t+1)} a_j \ge a_{t+2} + \cdots + a_n = \sum_{j\neq 1, t+1} a_j \ge s$. By Lemma \ref{criterion_in_power}, $x_{t+1} \in \sqrt{I^s:x^\a}$, which is a contradiction.

Thus we may assume that one of the edges $e_i$ is of the form $x_ix_u$ where $i \in \{2, ..., t\}$, $u\in \{t+2, ..., n\}$, say $e_1=x_i x_{t+2}$. This implies that $\supp N \subseteq \{1, t+2\}$, as otherwise, say $\supp N = \{1, u\}$ for some $u \in \{t+1, t+3,  ..., n\}$, then 
$$(x_1x_u)e_1 = (x_1x_i)(x_ux_{t+2}) \in I^2 \implies x^\a \in I^{r+1}, \text{ a contradiction}.$$
Thus $a_{t+1} = 0$. Let $A = N(t+1) \cap \supp \a$ and $B = ([n] \setminus N(t+1)) \cap \supp \a$. Denote $a = \sum_{i \in A} a_i$ and $b = \sum_{j \in B \setminus \{1\} } a_j$. Then we have 
$$a + b = \sum_{j \neq 1,t+1} a_j  \ge s.$$
Since $t+2 \in N(t+1)$, $a \ge 1$. Denote 
$$h = \prod_{j \notin N[t+1]} x_j^{a_j} = x_1^{a_1} \prod_{j\in B\setminus \{1\} } x_j^{a_j}.$$
If $a_1 \ge b$, by Lemma \ref{clique_term}, $\ord_I(h) = b$. Thus, 
$$\sum_{i \in N(t+1)}a_i + \ord_I(h) = a+ b \ge s.$$
If $a_1 < b$. By Lemma \ref{clique_term}, $\ord_I(h) = \lfloor (b + a_1)/2 \rfloor$. We have $|\a| = a+ b + a_1 \ge 2s-1$. Thus 
$$\sum_{i \in N(t+1)}a_i + \ord_I(h)  = a + \lfloor (b+a_1)/2 \rfloor \ge a + \lfloor (2s-1-a)/2 \rfloor \ge s$$
as $a \ge 1$. By Lemma \ref{criterion_in_power}, in either cases, $x_{t+1} \in \sqrt{I^{s}:x^\a}$, a contradiction.

{\noindent \bf Case 2.} $\deg f = 1$, i.e., $f = x_1$. If one of the edge $e_i$ is of the form $x_1x_i$, say $e_1$, let $x^\b = e_2 \cdots e_r N$, then the condition that $x_1 \in \sqrt{I^s:x^\a}$ also implies that $x_1 \in \sqrt{I^{s-1}:x^\b}$ which is a contradiction, as by induction $\Delta_\b(I^{s-1})$ is a cone over $1$. 

By Lemma \ref{radical_colon}, there exists a minimal generator $P$ of $I^s$ such that 
$$x_1 = \sqrt{P/\gcd(P,x^\a)}.$$
Since $x_1^2 | x^\a$, this implies that $x_1^3|P$. Therefore, $\sum_{i\in N(1)} a_i \ge 3$. Thus three of the edges $e_i$ is of the form $x_i x_u$ for some $i \in \{2, ..., t\}$, $u \in \{t+1, ..., n\}$. We may assume that $e_1 = x_2x_{t+1}$. This implies that all edges of the form $x_ix_u$ must be of the form $x_ix_{t+1}$, as otherwise 
$$(x_2x_{t+1})(x_ix_u) x_1^2  = (x_1x_2)(x_1x_i)(x_{t+1}x_u) \in I^3 \implies x^\a \in I^{r+1}.$$
Since three of the edges $e_i$ is of the form $x_ix_{t+1}$, we may assume that $e_2 = x_ix_{t+1}$. We claim that none of the edges $e_i$ is of the form $x_ux_v$ with $u,v\in \{t+2, ..., n\}$, as otherwise 
$$(x_2x_{t+1})(x_ix_{t+1})(x_ux_v)x_1^2 = (x_1x_2)(x_1x_i)(x_{t+1}x_u)(x_{t+1}x_v)\in I^4 \implies x^\a \in I^{r+1}.$$
In other words, all $e_i$ is of the form $x_ix_{t+1}$ for $i \in N(t+1)$. Assume that $s_1$ edges have $i \in \{2, ..., t\}$ and $s_2$ edges have $i \in \{t+2, ...,n\}$. Let $h = \prod_{j=t+1}^{n}x_j^{a_j}$. We have 
$$\sum_{i\in N(1)} a_i = s_1, \, \sum_{j=t+2}^n a_j = s_2, \,  a_{t+1} \ge s_1 + s_2.$$
By Lemma \ref{clique_term} and Lemma \ref{criterion_in_power}, 
$$r = s_1 + s_2 = \sum_{i\in N(1)} a_i + \ord_I(h) \ge s,$$
which is a contradiction.
\end{proof}

\begin{lem}\label{lem_cone_2} Let $\a \in \NN^n$ be an extremal exponent of $I^s$ such that $|\a| \ge 2s$. Then $\girth \Delta = 4$ and $\Delta_\a(I^s)$ is the empty complex.
\end{lem}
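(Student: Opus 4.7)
The plan is to leverage Lemma \ref{lem_cone}'s consequence (so $|\a|=2s$, $\ord_I(x^\a)=s-1$, and a good decomposition $x^\a = M\cdot x_px_q$ exists with $M$ a minimal generator of $I^{s-1}$ and $\{p,q\}$ a non-edge of $G$) together with a case analysis driven by Lemma \ref{criterion_in_power} and structural swaps. Since $\alpha(G)=2$, the pair $\{p,q\}$ is a maximal independent set, so $V(G) = \{p,q\}\cup V_p\cup V_q\cup V_{pq}$ with $V_p=N(p)\setminus N(q)$, $V_q=N(q)\setminus N(p)$, $V_{pq}=N(p)\cap N(q)$. For any $t\neq p,q$, WLOG $t\in N(p)$, so $x_tx^\a = (x_tx_p)\cdot x_q\cdot M \in I^s$, and the vertex set of $\Delta_\a(I^s)$ is contained in $\{p,q\}$. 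Combined with extremality (non-conicity over $p$ or $q$), $\Delta_\a(I^s)$ must be one of $\{\emptyset\}$ or $\{\emptyset,\{p\},\{q\}\}$.

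To rule out the two-point case, partition the edges of $M$ by endpoint location: class $A$ (both endpoints in $N[p]$), class $B$ (one in $N[p]$ and one in $\{q\}\cup V_q$), class $C$ (both in $\{q\}\cup V_q$). From $x_p\notin\sqrt{I^s:x^\a}$ together with the obvious $\ord_I\bigl(\prod_{u\notin N[p]}x_u^{a_u}\bigr)\ge |C|$, Lemma \ref{criterion_in_power} at $\{p\}$ forces every class-$A$ edge to contain $p$; symmetrically every class-$C$ edge contains $q$. Applying Lemma \ref{criterion_in_power} at $\{p,q\}$, the condition $\{p,q\}\notin \Delta_\a(I^s)$ gives $a_p+a_q\le s$, so $|B|=s+1-a_p-a_q\ge 1$. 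But any class-$B$ edge $\{u,v\}$ with $v\in V_q$ admits the swap $(x_px_u)(x_qx_v)$, which when substituted into $M$ produces $x^\a\in I^s$, impossible. So every class-$B$ edge is of the form $\{u,q\}$ with $u\in V_{pq}$; then the $q$-degree of $M$ is $|B|+|C|$, but $|C|$ alone already equals this $q$-degree, forcing $|B|=0$---a contradiction. Hence $\Delta_\a(I^s)=\{\emptyset\}$.

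For $\girth\Delta=4$, assume $\girth\Delta\ge 5$ (equivalently $\mu(G)=1$) and seek a contradiction. Since $x^\a\notin I^s$, every edge of $M$ disjoint from $\{p,q\}$ must lie entirely in $V_p$ or entirely in $V_q$, for any other placement admits a swap $(x_px_u)(x_qx_v)$ producing $x^\a\in I^s$. Using $x_p,x_q\in\sqrt{I^s:x^\a}$, Lemma \ref{criterion_in_power}, and Lemma \ref{clique_term} applied to the clique monomial $\prod_{u\notin N[p]}x_u^{a_u}$, $M$ must contain at least one such interior edge; WLOG $M$ has an edge $e_1=\{u,v\}\subseteq V_p$. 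If $M$ also has an edge $e_2=\{u',v'\}\subseteq V_q$, then $\mu(G)=1$ yields a cross-edge $\{u,u'\}\in G$ (after relabeling), and the three-edge replacement $e_1e_2\cdot x_px_q=(x_ux_{u'})(x_px_v)(x_qx_{v'})$ places $x^\a\in I^s$---contradiction. Otherwise $M$ has no edge in $V_q$; the sharper half-order bound in Lemma \ref{clique_term} applied at $q$ then forces the existence of a class-$A$ edge $\{p,w\}$ with $w\in V_{pq}$ in $M$, and the replacement $e_1\cdot(x_px_w)\cdot x_px_q=(x_px_u)(x_px_v)(x_wx_q)$ again places $x^\a\in I^s$---the final contradiction.

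The main obstacle is this boundary case where only one of $V_p,V_q$ carries an interior edge of $M$; resolving it requires the sharper half-order bound from Lemma \ref{clique_term} to produce an edge of $M$ hitting $V_{pq}$, together with the three-edge substitution using this vertex.
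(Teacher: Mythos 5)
Your proof follows essentially the same route as the paper's: the good decomposition $x^\a = M\cdot x_px_q$ with $M$ a minimal generator of $I^{s-1}$, the observation that the vertex set of $\Delta_\a(I^s)$ lies in $\{p,q\}$, elimination of the two-point complex via the minimality criterion for $x_px_q$ from Lemma \ref{criterion_in_power}, and swap/substitution arguments combined with Lemma \ref{clique_term} to force two ``interior'' edges on opposite sides, yielding the induced $4$-cycle (resp.\ a contradiction when $\girth\Delta\ge 5$). The one place where your bookkeeping does not hold up as written is the elimination of the two-point case: the equality $|B|=s+1-a_p-a_q$ is false whenever some class-$B$ edge contains $q$; the correct identity is $|B|-|B_q|=s+1-a_p-a_q$, where $B_q$ is the set of class-$B$ edges through $q$ (since $a_p=1+|A|$ and $a_q=1+|C|+|B_q|$ once all $A$-edges contain $p$ and all $C$-edges contain $q$). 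The follow-up sentence ``$|C|$ alone already equals this $q$-degree'' does not parse into a valid deduction. Fortunately the corrected identity shortens the argument: $a_p+a_q\le s$ gives $|B|-|B_q|\ge 1$, which directly produces a class-$B$ edge $\{u,v\}$ with $u\in N(p)$ and $v\in V_q$, and your swap $(x_px_u)(x_qx_v)$ finishes; equivalently, if every edge of $M$ met $\{p,q\}$ one would have $a_p+a_q=2+\deg_pM+\deg_qM=s+1>s$. The two assertions you delegate to Lemma \ref{clique_term} --- that $M$ must contain an edge inside $V_p$ or $V_q$, and, when $V_q$ carries none, an edge $\{p,w\}$ with $w\in V_{pq}$ --- do check out, but each requires actually writing down the order of the relevant clique monomial and comparing with $s$; these are precisely the computations the paper carries out with its $s_1,\dots,s_4$ edge counts.
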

\begin{proof} Let $x^\a = MN$ be a good representation of $x^\a$. By Lemma \ref{lem_cone}, $N$ is squarefree. Since $x^\a \notin I^s$, $\deg M \le 2s-2$. Since $|\a| \ge 2s$ and $\dim \Delta =1$, we must have $\deg N = 2$, $\deg M = 2s-2$, and $|\a| = 2s$. Assume that $N = x_1 x_{t+1}$, $N(1) = \{2, ..., t\}$, and $M = e_1 \cdots e_{s-1}$. 

Since $N = x_1 x_{t+1}$, none of the $e_i$ is of the form $x_i x_v$ where $i \in \{2, ..., t\}$ and $v \in \{t+2, ..., n\}$. Assume that there are $s_1$ edges of the form $1i$, $s_2$ edges of the form $ij$, $s_3$ edges of the form $k (t+1)$, and $s_4$ edges of the form $uv$ where $i,j \in \{2, ..., t\}$, $u,v \in \{t+2, ..., n\}$, $k \in N(t+1)$. Then $s_1 + s_2+s_3 + s_4 = s-1$. 

Since $\sqrt{I^s:x^\a} \supseteq \sqrt{I:(x_1x_{t+1})}$, $x_j \in \sqrt{I^s:x^\a}$ for all $j \neq 1, t+1$. Since $\Delta_\a(I^s)$ is not a cone over $x_j$ for $j = 1, t+1$, we deduce that $x_1 x_{t+1} \in \sqrt{I^s:x^\a}$. By Lemma \ref{criterion_in_power}, 
$$\sum_{i\neq 1, t+1} a_i= s_1 + s_3 + 2(s_2+s_4) = s-1 + (s_2+s_4) \ge s.$$
This implies that either $s_2$ or $s_4$ is positive. Assume that $s_2 > 0$. Then $x_1 \in \sqrt{I^s:x^\a}$ as $x_1^2 x_ix_j \in I^2$. For $\Delta_\a(I^s)$ not be a cone over $x_{t+1}$, we must have $x_{t+1} \in \sqrt{I^s:x^\a}$. In other words, $\Delta_\a(I^s)$ is the empty complex. 

It remains to show that $\girth \Delta = 4$. Note that, for all edges $e_k$, $k = 1, ..., s-1$ of the form $e_k = x_ix_j$, where $i,j \in \{2, ..., t\}$, then $x_ix_{t+1}, x_j x_{t+1} \notin I$, as otherwise 
$$N e_k = x_1x_{t+1}x_ix_{j} = (x_1x_i)(x_{t+1}x_j)\in I^2 \implies x^\a \in I^s.$$
Since $s_2 > 0$, we may assume that $e_1 = x_2x_3$. We claim that for all edge $e_k$ of the form $e_k = x_1x_i$, then $x_ix_{t+1} \notin I$. Otherwise,
$$Ne_1e_k = x_1x_{t+1} x_2x_3 x_1 x_i = (x_1x_2)(x_1x_3)(x_{t+1}x_i) \in I^3 \implies x^\a \in I^s.$$
Thus
$$\sum_{j \in N(t+1)} a_j= s_3 + 2s_4.$$
Let $h = \prod_{j \notin N[t+1]} x_j^{a_j}$. Then 
$$\deg h = |\a| - \sum_{j\in N[t+1]} a_j = (2s - 2s_3 - 2s_4 - 1).$$
By the reasoning in the previous paragraph, we see that $h$ is the products of edges of the first and second kind and $x_1$. In particular, $\ord_I(h) = \lfloor \deg h / 2\rfloor$. By Lemma \ref{criterion_in_power}, we have 
$$\sum_{j \in N(t+1)}a_j  + \ord_I(h)  = s_3 + 2s_4 + s-s_3 -s_4 - 1 = s + s_4 - 1 \ge s.$$
Thus $s_4 > 0$. We may assume that $e_2 = x_ux_v$ for some $u,v\in \{t+2, ...,n\}$. We claim that $2u, 3u, 2v,3v \notin G$. Assume by contrary that one of them belongs to $G$, say $2u \in G$, then 
$$Ne_1e_2 = x_1x_{t+1} x_2x_3 x_ux_v = (x_1x_3)(x_{t+1}x_v)(x_2x_u) \in I^3 \implies x^\a \in I^s.$$
Thus $2u3v$ form a $4$-cycle in $\Delta$, and so $\girth \Delta = 4$ as required.

The case where $s_4 > 0$ can be done in a similar manner, as then $x_{t+1} \in \sqrt{I^s:x^\a}$. We further deduce that $x_1 \in \sqrt{I^s:x^\a}$ and $s_2 > 0$.
\end{proof}

\begin{lem}\label{lem_cone_3} Assume that $\girth \Delta \ge 5$. Let $\a$ be an extremal exponent of $I^s$ such that $|\a| = 2s-1$. Then $\Delta_\a(I^s)$ has homology in degree $-1$ only.
\end{lem}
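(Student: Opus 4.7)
The plan is to extend the template of Lemmas~\ref{lem_cone} and~\ref{lem_cone_2} to the odd case $|\a|=2s-1$. Fix a good decomposition $x^\a=MN$ with $r=\ord_I(x^\a)<s$. Because $\a$ is extremal, Remark~\ref{rem_extremal_set} forbids $\Delta_\a(I^s)$ from being a cone over any index in $\supp\a$, and Lemma~\ref{lem_cone} then forces $N$ to be squarefree: if the critical index $i$ satisfied $x_i^2\mid N$, then $a_i\ge 2$ so $i\in\supp\a$, and $\Delta_\a(I^s)$ being a cone over $i$ would contradict extremality. Since $\supp N$ is independent in $G$ and $\alpha(G)=2$, $\deg N\le 2$; parity of $|\a|=2s-1$ against $\deg M=2r$ then forces $\deg N=1$. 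Relabelling, $N=x_1$, $r=s-1$, and $M=e_1\cdots e_{s-1}$. Set $N(1)=\{2,\ldots,t\}$; since $\alpha(G)=2$, $\{t+1,\ldots,n\}$ forms a clique in $G$, so Lemma~\ref{clique_term} controls $\ord_I$ on the non-neighbourhood of $1$.

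The goal is to show $\Delta_\a(I^s)=\{\emptyset\}$, i.e.\ $x_j\in\sqrt{I^s:x^\a}$ for every $j\in[n]$; this is precisely the statement that the reduced homology of $\Delta_\a(I^s)$ is concentrated in degree $-1$. Classify each edge of $M$ as in Lemma~\ref{lem_cone_2} into types (a) $x_1x_i$ with $i\in N(1)$, (b) $x_ix_j$ with $i,j\in N(1)$, (c) $x_ix_u$ with $i\in N(1),\ u\notin N[1]$, (d) $x_ux_v$ with $u,v\notin N[1]$, with counts $s_1+s_2+s_3+s_4=s-1$. The key step is to prove $x_1\in\sqrt{I^s:x^\a}$. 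Remark~\ref{rem_extremal_set} applied at $1\in\supp\a$ yields a minimal generator of $\sqrt{I^s:x^\a}$ divisible by $x_1$, which by $\alpha(G)=2$ is either $x_1$ itself or $x_1x_k$ for some $k\notin N[1]$. The plan is to exclude the second possibility: apply Lemma~\ref{criterion_in_power} to the maximal independent set $\{1,k\}$ to deduce $a_1+a_k\le s-1$, and combine with the inequality coming from the hypothetical $x_1\notin\sqrt{I^s:x^\a}$ (decoded via Lemma~\ref{criterion_in_power} and Lemma~\ref{clique_term}) to locate an edge of $G$ that, together with the non-edge $\{1,k\}$ and suitable edges of $M$, spans an induced $4$-cycle in $\Delta$, contradicting $\girth\Delta\ge 5$. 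Once $x_1\in\sqrt{I^s:x^\a}$ is established, the same scheme, applied symmetrically, handles $x_j$ for $j\in N(1)$ and for $j\notin N[1]$.

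The main obstacle will be the bookkeeping in the $x_1x_k$ subcase. Unlike in Lemma~\ref{lem_cone_2}, where $M$ had $s$ edges to draw from and a $4$-cycle was allowed (yielding $\girth\Delta=4$), here $M$ has only $s-1$ edges, so the would-be $4$-cycle must be assembled more economically from $\{1,k\}$ together with at most two edges of $M$. Translating the inequalities from Lemmas~\ref{criterion_in_power} and~\ref{clique_term} into constraints on $s_1,\ldots,s_4$ and on the neighbourhood of $k$, then matching non-edges of $G$ with edges of $M$, should exhibit the forbidden $4$-cycle in every admissible configuration. Establishing $x_j\in\sqrt{I^s:x^\a}$ for the remaining vertices follows by the same pattern, concluding that $\Delta_\a(I^s)=\{\emptyset\}$ and hence has reduced homology only in degree $-1$.
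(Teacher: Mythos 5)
Your reduction to $x^\a=Mx_1$ with $M=e_1\cdots e_{s-1}$ is correct and matches the paper, but the target you set afterwards --- $\Delta_\a(I^s)=\{\emptyset\}$, i.e.\ $x_j\in\sqrt{I^s:x^\a}$ for \emph{every} $j\in[n]$ --- is both stronger than the lemma requires and not reachable by your ``symmetric scheme''. Remark~\ref{rem_extremal_set} only produces a minimal generator of $\sqrt{I^s:x^\a}$ divisible by $x_j$ when $j\in\supp\a$; a vertex $w$ of the clique $[n]\setminus N[1]$ with $a_w=0$ is invisible to that remark and may genuinely survive in $\Delta_\a(I^s)$. The paper's proof accepts this: in the case where some $e_k=x_ix_j$ with $i,j\in N(1)$, it shows only that all of $N(i)\cup N(j)$ lies in $\sqrt{I^s:x^\a}$, and then uses $\girth\Delta\ge 5$ (two surviving clique vertices $u,v$ would give the chordless $4$-cycle $i,u,j,v$ in $\Delta$, since $ij$ and $uv$ are edges of $G$) to conclude that \emph{at most one} vertex survives; a subcomplex of a singleton has no reduced homology in degrees $\ge 0$, which is all the lemma needs. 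Your plan has no mechanism for handling a surviving vertex, and this is exactly where the girth hypothesis actually does its work.

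The other gap is the ``key step'' itself. Excluding $x_1x_k$ as a minimal generator is the entire combinatorial content of the lemma, and you leave it at ``should exhibit the forbidden $4$-cycle in every admissible configuration''. In fact the girth is not what kills this configuration. The paper first disposes of the cases where $M$ contains an edge meeting $N(1)$ in a vertex other than $1$ (your types (b) and (c)) by the rewriting trick $x^\a=(x_1x_i)M_1x_u$, which enlarges $\sqrt{I^s:x^\a}$ by $\sqrt{I:x_u}$; only in the residual case, where every $e_k$ is $x_1x_i$ or $x_ux_v$ with $u,v$ in the clique, does a minimal generator $x_1x_{t+1}$ have to be confronted. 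There the contradiction is purely numerical: first $\ord_I\bigl(\prod_{j\notin N[1]}x_j^{a_j}\bigr)=s_2$ gives $x_1\notin\sqrt{I^s:x^\a}$, then $x_{t+1}^2x_ux_v\in I^2$ forces every clique edge of $M$ to pass through $t+1$, whence $\sum_{i\ne 1,t+1}a_i=s_1+s_2=s-1<s$, contradicting the inequality from Lemma~\ref{criterion_in_power} applied to the maximal independent set $\{1,t+1\}$. No $4$-cycle appears and $\girth\Delta\ge 5$ plays no role at this point. So your proposal both omits the necessary case split on the edge types of $M$ and attributes the final contradiction to the wrong mechanism; as written, the central step is an assertion rather than a proof.
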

\begin{proof} By Lemma \ref{lem_cone}, we deduce that $x^\a = M x_1$, where $M =e_1 \cdots e_{s-1}$ for some edges $e_i$ of $G$. We keep the notation as in the previous proof. Namely, $N(1) = \{2, ..., t\}$. Since $\sqrt{I^s:x^\a} \supseteq \sqrt{I:x_1}$, $x_i \in \sqrt{I^s:x^\a}$ for all $i \in N(1)$.

If one of the $e_i$ is of the form $x_ix_u$ for $i \in \{2, ..., t\}$, $u \in \{t+1, ..., n\}$, we can write
$$x^\a = (x_1x_i)M_1 x_u.$$
Thus $\sqrt{I^s:x^\a} \supseteq \sqrt{I:x_u}$. In particular, $x_j \in \sqrt{I^s:x^\a}$ for all $j \in N(1) \cup N(u)$. Therefore, $\Delta_\a(I^s)$ is the empty complex.

If one of the edge $e_i$ is of the form $x_ix_j$, for $i, j \in \{2, ..., t\}$. Then since $x_1x_ix_j = (x_1x_i)x_j = (x_1x_j)x_i$, we deduce that $x_k \in \sqrt{I^s:x^\a}$, for all $k \in N(i) \cup N(j)$. Now since $\girth G \ge 5$ and $\{t+1, ..., n\}$ is a clique, $|\{t+1, ...,n\} \setminus (N(i) \cup N(j)| \le 1$. By Lemma \ref{cone}, $\Delta_\a(I^s)$ is a subsimplicial complex of a singleton, thus can only have homology in degree $-1$ only.

Thus, we may assume that all edges $e_i$ are of the form $x_1x_i$ or $x_u x_v$ where $i \in \{2, ..., t\}$ and $u,v\in \{t+1, ..., n\}$. Let $s_1$ and $s_2$ be the number of edges of the first and second type respectively. Then $s_1 + s_2 = s-1$. Let $h = \prod_{j\notin N[1]} x_j^{a_j}$, then $h$ is the product of edges of the second type. Therefore, $\ord_I(h) = s_2$. Thus 
$$\sum_{i\in N(1)}a_i + \ord_I(h) = s_1 + s_2 = s-1.$$
By Lemma \ref{criterion_in_power}, $x_1 \notin \sqrt{I^s:x^\a}.$ By Lemma \ref{cone} and the fact that $\Delta_\a(I^s)$ is not a cone over $1$, we assume that $x_1x_{t+1}$ is a minimal generator of $\sqrt{I^s:x^\a}$. By Lemma \ref{criterion_in_power}, $\sum_{i \neq 1,t+1} a_i \ge s$. Thus $s_2 > 0$. If one of the edges of the second form $x_ux_v$ is such that $u,v\in \{t+2, ..., n\}$, then $x_{t+1} \in \sqrt{I^s:x^\a}$, since $x_{t+1}^2x_ux_v \in I^2$. Thus all edges of the second type must be of the form $x_{t+1}x_u$ for some $u \in \{t+2, ..., n\}$. But then $x^\a = \prod(x_1x_i) \prod (x_{t+1}x_u) x_1$, thus
$$\sum_{i\neq 1, t+1} a_i= s_1 + s_2 = s-1 < s,$$
which is a contradiction.
\end{proof}

We are now ready for the proof of Theorem \ref{dim1}. We separate two lemmas for the case $\girth \Delta = 4$ and the case $\girth \Delta \ge 5$.

\begin{lem}\label{reg_girth_4} Let $\Delta$ be one-dimensional simplicial complex. Assume that $\girth \Delta = 4$. Then for all $s$ and all intermediate ideal $J$ in $\Inter(I^s, I^{(s)})$ we have
$$\reg J = 2s+1.$$
\end{lem}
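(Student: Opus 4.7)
The lower bound $\reg J \ge 2s+1$ is immediate from Corollary \ref{reg_lower_bound}: a $4$-cycle $1\text{-}2\text{-}3\text{-}4\text{-}1$ in $\Delta$ yields the induced matching $\{13,24\}$ of $G$, and since $\alpha(G)=2$ one has $\mu(G)\le 2$, so $\mu(G)=2$. For the upper bound, by Lemma \ref{Key0} it suffices to show that every extremal exponent $(\a,i)$ of $J$ satisfies $|\a|+i\le 2s$. I organize the argument by a case analysis on $i$ and $|\a|$, invoking Lemmas \ref{h1_bounds}, \ref{intermediate_reduction}, \ref{lem_cone}, and \ref{lem_cone_2} in turn.

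The easy cases come first. If $i=2$, Lemma \ref{h1_bounds} gives $|\a|\le 2s-2$, so $|\a|+i\le 2s$. If $i\le 1$ and $|\a|\le 2s-1$, the bound is trivial. The remaining case is $i\le 1$ and $|\a|\ge 2s$. In this range, $x^\a\notin J\supseteq I^s$ forces $\ord_I(x^\a)=r\le s-1$, so $x^\a$ admits a good decomposition $x^\a=MN$ with $\deg M=2r$ and $N\notin I$. By Lemma \ref{intermediate_reduction}, $\Delta_\a(J)=\Delta_\a(I^s)$, and Remark \ref{rem_extremal_set} applied to the extremal $(\a,i)$ of $J$ says that $\Delta_\a(I^s)$ is not a cone over any $t\in\supp\a$. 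Let $j$ be the critical index of $N$; since $j\in\supp N\subseteq\supp\a$, the contrapositive of Lemma \ref{lem_cone} forces $x_j^2\nmid N$, so (as $j$ carries the maximum exponent of $N$) $N$ is squarefree. Since $\supp N$ is independent in $G$ and $\alpha(G)=2$, we get $\deg N\le 2$, and therefore $|\a|=2r+\deg N\le 2(s-1)+2=2s$. Combined with $|\a|\ge 2s$, we conclude $|\a|=2s$, $r=s-1$, and $\deg N=2$.

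With these structural constraints in place, Lemma \ref{lem_cone_2} applies: its proof only needs that $\Delta_\a(I^s)$ is not a cone over the two vertices in $\supp N$, which we have verified via the identification $\Delta_\a(J)=\Delta_\a(I^s)$. The lemma then yields that $\Delta_\a(I^s)=\{\emptyset\}$ is the empty complex, whose only nonvanishing reduced homology sits in degree $-1$, forcing $i=0$. Hence $|\a|+i=2s$ in the final case as well, so $\reg S/J\le 2s$ and $\reg J\le 2s+1$, matching the lower bound. The single delicate point is the last one: making sure the hypotheses of Lemma \ref{lem_cone_2}, stated for extremal exponents of $I^s$, are usable here through the identification $\Delta_\a(J)=\Delta_\a(I^s)$; everything else is bookkeeping over the preceding lemmas.
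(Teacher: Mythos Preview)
Your proof is correct and follows essentially the same route as the paper: lower bound via Corollary \ref{reg_lower_bound}, then case split on $i$ and $|\a|$, invoking Lemmas \ref{h1_bounds}, \ref{intermediate_reduction}, \ref{lem_cone}, and \ref{lem_cone_2} exactly as the paper does. You are in fact more explicit than the paper on two points: you spell out the deduction $|\a|\le 2s$ from Lemma \ref{lem_cone} (the paper just asserts it, relying on the outline in the introduction), and you flag that Lemma \ref{lem_cone_2} is stated for extremal exponents of $I^s$ while $\a$ is only extremal for $J$ --- a gap the paper itself glosses over but which, as you correctly note, is bridged by $\Delta_\a(J)=\Delta_\a(I^s)$ since the proof of Lemma \ref{lem_cone_2} only uses that $\Delta_\a(I^s)$ is not a cone over the indices in $\supp N$.
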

\begin{proof} Since $\girth \Delta =4$, the graph $G$ which is the complement of $\Delta$ has induced matching number at least $2$. By Lemma \ref{reg_lower_bound}, $\reg J \ge 2s+ 1$. 

Fix $s \ge 2$. Let $(\a,i)$ be an extremal exponent of $S/J$. It suffices to prove that $|\a| + i \le 2s$. Since $\Delta_\a(J)$ is a subsimplicial complex of $\Delta$,  $i \le 2$. If $i = 2$, by Lemma \ref{h1_bounds}, $|\a| \le 2s-2$. Thus $|\a| + i \le 2s$. 

Note that if $|\a| \le 2s-2$, then $|\a| + i \le 2s$. Thus, we may assume that $|\a| \ge 2s-1$. By Lemma \ref{intermediate_reduction}, $\Delta_\a(J) = \Delta_\a(I^s)$. By Lemma \ref{lem_cone}, $|\a| \le 2s$. Furthermore, by Lemma \ref{lem_cone_2}, if $|\a| = 2s$, then $i = 0$. This implies that $|\a| + i \le 2s$, as required.
\end{proof}

\begin{lem}\label{reg_girth_5} Let $\Delta$ be a one-dimensional simplicial complex. Assume that $\girth \Delta \ge 5$. Then for all $s \ge 2$ and all intermediate ideal $J$ in $\Inter(I^s,I^{(s)})$, we have 
$$\reg J = 2s.$$
\end{lem}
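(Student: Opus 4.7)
The plan is to follow the proof of Lemma \ref{reg_girth_4} closely, pushing the target upper bound for $|\a|+i$ down by one to reflect the stronger hypothesis $\girth \Delta \ge 5$. For the lower bound, since $\girth \Delta \ge 5$, the complex $\Delta$ cannot be the complete $1$-skeleton $K_n^{(1)}$ (which has girth $3$), so the complement graph $G$ contains at least one edge, giving $\mu(G) \ge 1$. Corollary \ref{reg_lower_bound} then yields $\reg J \ge 2s + \mu(G) - 1 \ge 2s$. It thus remains to prove that $\reg S/J \le 2s-1$, i.e., $|\a| + i \le 2s-1$ for every extremal exponent $(\a,i)$ of $J$.

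Fix such an extremal exponent $(\a,i)$. Because $I = I_\Delta \subseteq \sqrt{J:x^\a}$, Lemma \ref{cone} implies that $\Delta_\a(J)$ is a subcomplex of $\Delta$, so $\dim \Delta_\a(J) \le 1$ and therefore $i \le 2$. I will dispose of the small-$|\a|$ cases first. If $i = 2$, Lemma \ref{h1_bounds} yields $|\a| \le 2s-3$, so $|\a|+i \le 2s-1$. If $|\a| \le 2s - 3$, then $|\a| + i \le 2s-1$ from $i \le 2$. If $|\a| = 2s-2$, then $i = 2$ would contradict Lemma \ref{h1_bounds}, so $i \le 1$ and again $|\a|+i \le 2s-1$.

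The remaining case is $|\a| \ge 2s-1$. By Lemma \ref{intermediate_reduction}, $\Delta_\a(J) = \Delta_\a(I^s)$, so extremality forces $\Delta_\a(I^s)$ not to be a cone over any vertex of $\supp \a$. Pick a good decomposition $x^\a = MN$; by Lemma \ref{lem_cone}, no critical index $j$ of $N$ can satisfy $x_j^2 \mid N$, since otherwise $\Delta_\a(I^s)$ would be a cone over $j \in \supp N \subseteq \supp \a$. Consequently $N$ is squarefree, and since $N \notin I$ and $\dim \Delta = 1$, we have $\deg N \le 2$, hence $|\a| \le 2(s-1) + 2 = 2s$. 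If $|\a| = 2s$, Lemma \ref{lem_cone_2} would force $\girth \Delta = 4$, contradicting our hypothesis, so this case is vacuous. Therefore $|\a| = 2s-1$, and Lemma \ref{lem_cone_3} tells us that $\Delta_\a(I^s) = \Delta_\a(J)$ has non-vanishing reduced homology only in degree $-1$, whence $i = 0$ and $|\a|+i = 2s-1$. Combined with the lower bound this gives $\reg J = 2s$. Since Lemmas \ref{lem_cone}, \ref{lem_cone_2}, and \ref{lem_cone_3} already perform the delicate combinatorial analysis, the only real work left is the above bookkeeping, and there is no serious obstacle.
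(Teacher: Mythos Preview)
Your proof is correct and follows essentially the same route as the paper's: bound $i\le 2$ via $\Delta_\a(J)\subseteq\Delta$, use Lemma~\ref{h1_bounds} for the $i=2$ case, reduce to $|\a|\ge 2s-1$ where Lemma~\ref{intermediate_reduction} gives $\Delta_\a(J)=\Delta_\a(I^s)$, then invoke Lemmas~\ref{lem_cone}, \ref{lem_cone_2}, \ref{lem_cone_3} to force $|\a|=2s-1$ and $i=0$. The only cosmetic difference is that you justify the lower bound via Corollary~\ref{reg_lower_bound} and $\mu(G)\ge 1$, whereas the paper simply says ``by degree reason''; both are fine.
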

\begin{proof} By degree reason, $\reg S/J \ge 2s-1$. Fix $s \ge 2$. Let $(\a,i)$ be an extremal exponent of $S/J$. It suffices to prove that $|\a| +i \le 2s-1$. Since $\Delta_\a(J)$ is a subsimplicial complex of $\Delta$,  $i \le 2$. If $i = 2$, by Lemma \ref{h1_bounds}, $|\a| \le 2s-3$. Thus $|\a| + i \le 2s-1$. 

Thus, we may assume that $i \le 1$. If $|\a| \le 2s-2$, then $|\a| + i \le 2s-1$. Thus, it remains to consider the case where $|\a| \ge 2s-1$. By Lemma \ref{intermediate_reduction}, $\Delta_\a(J) = \Delta_\a(I^s)$. 

By Lemma \ref{lem_cone}, Lemma \ref{lem_cone_2}, and the assumption that $\girth \Delta \ge 5$, we must have $|\a| = 2s-1$. By Lemma \ref{lem_cone_3}, $i = 0$. In all cases, $|\a| + i \le 2s-1$, as required.
\end{proof}

\begin{proof}[Proof of Theorem \ref{dim1}] Follows from Lemmas \ref{reg_girth_3}, \ref{reg_girth_4}, and \ref{reg_girth_5}.
\end{proof}

\section{ Graphs on small number of vertices }\label{sec_small_graphs}

In this section, we provide an example to illustrate our procedure for computing regularity of intermediate ideals $J \in \Inter(I^s, I^{(s)})$ for edge ideals of graphs with $\alpha(G) > 2$. More precisely, we prove

\begin{thm}\label{small_graphs} Let $G$ be a graph supported on $n \le 5$ vertices. For each $s \ge 2$, let $J \in \Inter(I^s,I^{(s)})$ be an intermediate ideal. Then, $\reg J= \reg I^s = \reg I^{(s)}.$
\end{thm}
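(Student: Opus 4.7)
The plan is to reduce to a short list of graphs and mirror the strategy of Section \ref{sec_inter_dim1} on each case. First, the case $\alpha(G) \le 2$ is already covered by Theorem \ref{dim1}, so I focus on $\alpha(G) \ge 3$. Whenever $G$ is bipartite, $I^s = I^{(s)}$ by the theorem of Simis--Vasconcelos--Villarreal, which makes $\Inter(I^s,I^{(s)}) = \{I^s\}$ and the claim vacuous. Whenever $G$ contains an isolated vertex or a component consisting of a single edge, the problem reduces to a proper subgraph via Corollary \ref{restriction_inq}. On $n\le 5$ vertices the remaining non-bipartite, non-reducible graphs with $\alpha(G)\ge 3$ form a short explicit list (essentially a triangle with one or two additional vertices attached by various edges), each of which I would handle directly.

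For each such $G$, I first compute the common value $\reg I^s = \reg I^{(s)}$. The symbolic regularity is accessible from Takayama's formula and the generator bound Lemma \ref{partial_deg_bound_1}; the ordinary regularity is accessible from Lemma \ref{criterion_in_power}, which describes $\Delta_\a(I^s)$ in terms of independent sets $F$ and the $I$-order of the restriction $\prod_{u\notin N[F]} x_u^{a_u}$. The lower bound on $\reg J$ is supplied by Corollary \ref{reg_lower_bound}, which for each $G$ I would verify matches the computed common value, supplementing if necessary by exhibiting an explicit extremal exponent that remains a witness for every intermediate ideal in $\Inter(I^s,I^{(s)})$.

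For the upper bound $\reg J \le \reg I^s$ at an extremal exponent $(\a,i)$ of $J$ with $x^\a \notin J$, I would run the template of Lemma \ref{intermediate_reduction}: either $\sqrt{J:x^\a} = \sqrt{I^s:x^\a}$, so $\Delta_\a(J) = \Delta_\a(I^s)$ and the bound follows from the ordinary computation; or else there is a minimal generator $f$ of $\sqrt{J:x^\a}$ lying outside $\sqrt{I^s:x^\a}$, whose existence forces $|\a|$ to be small via Lemma \ref{criterion_in_sym} combined with a partial-derivative constraint in the spirit of Lemma \ref{partial_deg_bound_2}. The cone and non-cone arguments of Lemmas \ref{lem_cone}--\ref{lem_cone_3} then transfer, case by case.

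The principal obstacle is that for $\alpha(G)\ge 3$ the set $[n]\setminus N[F]$ is no longer a clique, so Lemma \ref{clique_term} cannot be applied directly to compute $\ord_I\big(\prod_{u\notin N[F]} x_u^{a_u}\big)$. However, because $n\le 5$, the induced subgraph $G|_{[n]\setminus N[F]}$ has at most three vertices and hence only finitely many possible shapes; for each shape the order is computed by enumerating edge factorizations directly, giving a small table that replaces Lemma \ref{clique_term} in each case. With these order computations in hand, the five-step procedure outlined in the Introduction closes on each remaining graph, yielding the desired rigidity.
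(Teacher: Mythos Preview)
Your proposal is correct and follows essentially the same route as the paper: reduce via Theorem~\ref{dim1} and the bipartite observation to a short explicit list of non-bipartite graphs on at most five vertices with $\alpha(G)\ge 3$, then handle each by the extremal-exponent method, replacing Lemma~\ref{clique_term} with direct order computations on the small induced subgraphs $G|_{[n]\setminus N[F]}$. The paper lists the four such graphs and carries out one case in full detail (establishing equivalences of the form $x_F\in\sqrt{J:x^\a}\Leftrightarrow x_F\in\sqrt{I^s:x^\a}$ directly from Lemmas~\ref{criterion_in_sym} and~\ref{criterion_in_power}, exactly as you outline), leaving the remaining three as exercises.
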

By Theorem \ref{dim1}, it remains to consider non-bipartite graphs $G$ with $\alpha(G)  > 2$. There are precisely four such graphs, listed below 
\begin{align*}
    (x_1x_2,x_1x_3,x_2x_3,x_2x_4,x_3x_5),&(x_1x_2,x_1x_3,x_2x_3,x_1x_4,x_1x_5),\\
    (x_1x_2,x_1x_3,x_2x_4,x_3x_4,x_3x_5),&(x_1x_2,x_1x_3,x_2x_4,x_3x_4,x_3x_5,x_2x_5,x_3x_4).
\end{align*}
The regularity of powers and symbolic powers of these graphs are known from \cite{B}, \cite{F}. Nonetheless, we will give an explicit calculation for all intermediate ideals for the first graph to illustrate techniques used in the paper. The other cases can be done in similar manner and are left as an exercise for interested readers.
\begin{lem} Let $I = (x_1x_2,x_1x_3,x_2x_3,x_2x_4,x_3x_5) \subset K[x_1,...,x_5]$. Let $J$ be an intermediate ideal in $\Inter(I^s,I^{(s)})$. Then 
$$\reg J = 2s.$$
\end{lem}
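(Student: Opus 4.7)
The plan is to follow the procedure developed in Section \ref{sec_inter_dim1}, adapted to this specific graph. First I identify the combinatorial structure: $G$ is a triangle on $\{1,2,3\}$ with pendant vertices $4$ at $2$ and $5$ at $3$, so the facets of $\Delta(I)$ are $\{1,4,5\}$, $\{2,5\}$, $\{3,4\}$, giving $\alpha(G)=3$ and $\mu(G)=1$. The lower bound $\reg J \ge 2s$ follows at once from Corollary \ref{reg_lower_bound}.

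For the upper bound, let $(\a,i)$ be an extremal exponent of $J$; I want to show $|\a|+i \le 2s-1$. Because $\Delta_\a(J) \subseteq \Delta(I)$ has dimension at most two and $\{1,4,5\}$ is the only $2$-face, $H_2(\Delta_\a(J))=0$ automatically and thus $i \le 2$. I rule out $i=2$ by observing that the unique $1$-cycle in the $1$-skeleton of $\Delta(I)$ is the boundary of the triangle $\{1,4,5\}$; applying Corollary \ref{cor_maximal_independent_term} to this maximal independent set and Lemma \ref{criterion_in_sym} to each of its three bounding edges (each contained only in the facet $\{1,4,5\}$) shows that the $2$-face and all three edges belong to $\sqrt{J:x^\a}$ precisely when $a_2+a_3 \ge s$. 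Hence the cycle is always filled when present, so $H_1(\Delta_\a(J))=0$; a direct enumeration of vertex links in $\Delta(I)$ then rules out nonvanishing $H_1$ at $\lk F$ for $|F|=1$ as well.

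For $i=1$, the possible faces $F \in \Delta_\a(J)$ with $F \cap \supp \a = \emptyset$ and disconnected $\lk F$ are $F=\emptyset$, $F=\{4\}$ and $F=\{5\}$, since the only disconnected vertex links in $\Delta(I)$ are at the vertices $4$ and $5$. For each candidate, Lemma \ref{criterion_in_sym} applied to the faces forcing the disconnection yields linear inequalities on $\a$ whose sum bounds $|\a| \le 2s-2$. For $i=0$, the face $F$ with $F \cap \supp \a=\emptyset$ must be a facet of $\Delta_\a(J)$. I argue that singleton and edge facets of $\Delta_\a(J)$ are impossible: the conditions forcing every strict superface of $F$ into $\sqrt{J:x^\a}$ already force $F$ itself into $\sqrt{J:x^\a}$. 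Hence either $F \in \{\{1,4,5\},\{2,5\},\{3,4\}\}$, giving $|\a|=\sum_{j \notin F}a_j < s$ directly, or $F=\emptyset$ and $\Delta_\a(J)=\{\emptyset\}$; in the latter case the three facet conditions (via Lemma \ref{criterion_in_power} and Lemma \ref{clique_term} applied to the clique $\{1,2,3\}$) force $\ord_I(x^\a) \ge s$ whenever $|\a| \ge 2s$, contradicting $x^\a \notin J \supseteq I^s$ and hence $|\a| \le 2s-1$.

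The main obstacle is verifying that the necessary conditions from Lemma \ref{criterion_in_sym} coincide with the sufficient conditions from Lemma \ref{criterion_in_power} for each relevant independent set, so that the analysis becomes uniform across all $J \in \Inter(I^s, I^{(s)})$. The key computations use Lemma \ref{clique_term} applied to the clique $\{1,2,3\}$ to evaluate the $\ord_I$ of monomials supported on this clique.
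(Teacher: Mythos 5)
Your proposal is correct in substance and uses the same computational engine as the paper --- the five membership equivalences obtained by matching the necessary conditions of Lemma \ref{criterion_in_sym} (one per facet of $\Delta(I)$ containing the face) against the sufficient conditions of Lemma \ref{criterion_in_power} with $\ord_I$ evaluated via Lemma \ref{clique_term} --- but it organizes the endgame differently. The paper fixes $|\a|\ge 2s-1$, then does casework on $\supp\a$ (contained in the triangle $\{1,2,3\}$ or meeting $\{4,5\}$), chasing which monomials can be minimal in $\sqrt{J:x^\a}$ until it concludes $\Delta_\a(J)=\{\emptyset\}$ and $|\a|\le 2s-1$. You instead do casework on the homological degree $i$ and on which faces of $\Delta(I)$ can carry nonvanishing link homology. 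Your route is arguably cleaner for this small complex: for $i=1$ you get the sharper bound $|\a|\le 2s-2$ directly (e.g.\ disconnectedness of $\Delta_\a(J)$ forces both vertices $2$ and $3$ to be present, and summing $a_1+a_3+a_4\le s-1$ with $a_1+a_2+a_5\le s-1$ gives $|\a|+a_1\le 2s-2$), and for $i=0$ with $F=\emptyset$ the three inequalities $a_2+a_3\ge s$, $a_1+a_3+a_4\ge s$, $a_1+a_2+a_5\ge s$ together with $|\a|\ge 2s$ do force $x^\a\in I^s$ (pair $x_2$ with $x_4$, $x_3$ with $x_5$, and apply Lemma \ref{clique_term} to the remainder on the clique $\{1,2,3\}$; each of the three cases for the maximum in Lemma \ref{clique_term} is covered by one of the three inequalities). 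I checked these computations and they go through.

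Two points need to be made explicit to close the argument. First, your claim that disconnected links can only occur at $F\in\{\emptyset,\{4\},\{5\}\}$ does not follow merely from inspecting links in $\Delta(I)$: a subcomplex could a priori have a disconnected link at $1$ (namely $\{\{4\},\{5\}\}$ without the edge). This is ruled out only because $x_1x_4,x_1x_5,x_4x_5,x_1x_4x_5$ all lie in $\sqrt{J:x^\a}$ simultaneously (each is equivalent to $a_2+a_3\ge s$), an equivalence you have already set up --- state it. Second, in the cases $F=\{4\}$ and $F=\{5\}$ the bound $|\a|\le 2s-2$ uses $a_4=0$ (resp.\ $a_5=0$) coming from $F\cap\supp\a=\emptyset$; without it the summed inequality only gives $|\a|\le 2s-2-a_2+a_4$. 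Neither point is a genuine gap, but both must appear in a written proof.
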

\begin{proof} By degree reason, $\reg J \ge 2s$. Let $(\a,i) \in \NN^5 \times \NN$ be an extremal exponent of $J$. It suffices to prove that $|\a| +i \le 2s-1$. From the definition, we have 
\begin{align*}
    \ord_I(x_1^{a_1}x_3^{a_3}x_5^{a_5}) & = \min(a_3,a_1+a_5)\\
    \ord_I(x_1^{a_1}x_2^{a_2}x_4^{a_4}) & =  \min(a_2,a_1+a_4). 
\end{align*}

Note that $N(1) = N(\{4,5\}) = N(\{1,4,5\}) = \{2,3\}$, and any monomials supported on the complement of $N[1]$, $N[\{4,5\}]$ have order $0$. By Lemma \ref{criterion_in_power} and Lemma \ref{criterion_in_sym}, we have

\begin{align}
    x_1 \in \sqrt{J:x^\a} & \Leftrightarrow a_2 + a_3 \ge s \Leftrightarrow x_1 x_4x_5 \in \sqrt{J:x^\a} \label{eq1}\\ 
    x_2 \in \sqrt{J:x^\a} & \Leftrightarrow a_1 + a_3 + a_4 \ge s \Leftrightarrow x_2 x_5 \in \sqrt{J:x^\a} \label{eq2}\\
    x_3 \in \sqrt{J:x^\a} & \Leftrightarrow a_1 + a_2 + a_5 \ge s \Leftrightarrow x_3 x_4 \in \sqrt{J:x^\a} \label{eq3} \\
    x_4 \in \sqrt{J:x^\a} & \Leftrightarrow a_2 + \min(a_3,a_1+a_5) \ge s\label{eq4} \\
    x_5 \in \sqrt{J:x^\a} & \Leftrightarrow a_3 + \min(a_2,a_1+a_4) \ge s.\label{eq5}
\end{align}
In particular, 
\begin{equation}
    x_1x_4, x_1x_5, x_1x_4x_5, x_2x_5, \text{ and } x_3x_4 \text{ cannot be minimal in } \sqrt{J:x^\a}.\label{eq6} 
\end{equation}

Since $\Delta_\a(J)$ is a subsimplicial complex of 
$$\D(I) = \{\{1,4,5\},\{2,5\},\{3,4\}\},$$
$i = 2$ if and only if $14,45,51$ are facets of $\Delta_\a(J)$. This implies that $x_1x_4x_5$ is minimal in $\sqrt{J:x^\a}$, which is not the case. Thus $i \le 1$. If $|\a| \le 2s-2$, $|\a| +i \le 2s-1$ as required. Now assume that $|\a| \ge 2s-1$. By Remark \ref{rem_extremal_set}, $a_i \le s-1$.

Since $(a_1 + a_2 + a_5) + (a_1 + a_3 + a_4) = a_1 + |\a| \ge 2s - 1$, one of the term is at least $s$. By \eqref{eq2}, \eqref{eq3}, either $x_2$ or $x_3$ belongs to $\sqrt{J:x^\a}$. In other words, 
\begin{equation}
    x_2x_3 \text{ cannot be minimal in } \sqrt{J:x^\a}.\label{eq7}
\end{equation}

First, assume that $\supp \a \subseteq \{1,2,3\}$. Since $a_i \le s-1$, and $|\a| \ge 2s-1$, we have $0<a_1 < a_2 + a_3, 0<a_2 < a_1 + a_3, 0<a_3 < a_1 + a_2$. By Lemma \ref{clique_term},
$$\ord_I(x^\a) = \lfloor |\a|/2 \rfloor, \, \ord_I(x^\a/x_i) = \lfloor (|\a| - 1)/2 \rfloor,$$
for all $i = 1, 2, 3$. Since $x^\a \notin J$, $|\a| =2s-1$, and $x^\a/x_i \in I^{s-1}$ for $i = 1, 2, 3$. In particular, $x_i \in \sqrt{J:x^\a}$ for all $i \in N(1) \cup N(2) \cup N(3)$. Thus $\Delta_\a(J)$ is the empty complex.

Now assume that $\supp \a \cap \{4,5\} \neq \emptyset$. By symmetry, we may assume that $4 \in \supp \a$. By Remark \ref{rem_mingens_degree_complex}, there exists a minimal generator $f$ of $\sqrt{J:x^\a}$ such that $x_4 | f$. By \eqref{eq6}, we deduce that $f$ must be one of the following $x_2x_4,x_4x_5$ or $x_4$. 

{\noindent \bf Case 1.} $f=x_2x_4$ is minimal in $\sqrt{J:x^\a}$. Then $x_2, x_4 \notin \sqrt{J:x^\a}$. By \eqref{eq2}, \eqref{eq4},
$$a_1 + a_3 + a_4 < s,\, a_2 + \min(a_3, a_1 + a_5) < s.$$
Since $|\a| \ge 2s - 1$, $a_2 + a_5 \ge s$. Thus, $a_3 < a_1 + a_5$ and $a_2 + a_3 < s$. By \eqref{eq3}, $x_3 \in \sqrt{J:x^\a}$. Since $a_2 + a_5 \ge s$ and $a_2 \le s-1$, $a_5 > 0$. Thus $5 \in \supp \a$. By Remark \ref{rem_mingens_degree_complex}, \eqref{eq6}, and the fact that $x_3 \in \sqrt{J:x^\a}$, either $x_5$ or $x_4x_5$ must be minimal in $\sqrt{J:x^\a}$. In either cases, we must have $x_4x_5 \in \sqrt{J:x^\a}$. By \eqref{eq1}, $a_2 + a_3 \ge s$, a contradiction.

{\noindent \bf Case 2.} $f = x_4x_5$ is minimal in $\sqrt{J:x^\a}$. Then $x_4, x_5 \notin \sqrt{J:x^\a}$. By \eqref{eq1}, \eqref{eq4}, we have $x_1 \in \sqrt{J:x^\a}$, and 
$$a_2 + a_3 \ge s, \, a_2 + \min(a_3, a_1+a_5) < s.$$
Thus, $a_3 > a_1 + a_5$ and $a_2 + a_1 + a_5 < s$. Since $|\a| \ge 2s - 1$, $a_3 + a_4 \ge s$. Thus, 
$$a_3 + \min(a_2, a_1 + a_4) \ge s \implies x_5 \in \sqrt{J:x^\a} \text{ by } \eqref{eq5},$$
a contradiction.

Therefore, $x_4 \in \sqrt{J:x^\a}$. By \eqref{eq1},
$$a_2 + a_3 \ge s, \text{ and } x_1 \in \sqrt{J:x^\a}.$$

Since $a_2, a_3 \le s-1$, $a_2, a_3 \ge 1$. By Remark \ref{rem_mingens_degree_complex}, \eqref{eq6}, \eqref{eq7}, and the fact $x_1, x_4 \in \sqrt{J:x^\a}$, we must have $x_2 \in \sqrt{J:x^\a}$. Thus $a_1 + a_3 + a_4 \ge s$. Therefore,
$$a_3 + \min(a_2, a_1+a_4) \ge s \implies x_5 \in \sqrt{J:x^\a} \text{ by } \eqref{eq5}.$$
Since $3 \in \supp \a$, by Remark \ref{rem_mingens_degree_complex}, $x_3 \in \sqrt{J:x^\a}$. Therefore, $\Delta_\a(J)$ is the empty complex. We have the following 
$$a_1 + a_2 + a_5 \ge s, a_2 + a_3 \ge s, a_1 + a_3 + a_4 \ge s.$$
Note that $x^\a \in I^r$ for 
$$r = \min(a_4,a_2) + \min(a_1+a_5,a_3)$$
thus $a_4 < a_2$. Similarly, $a_5 < a_3$. Write $x^\a = (x_2x_4)^{a_4}(x_3x_5)^{a_5} h$. Then $h = x^\b$ with $b_1 = a_1, b_2 = a_2 - a_4, b_3 = a_3 - a_5.$ Since $x^\a \notin J$, $a_4 + a_5 + \ord_I(h) < s$. Since, $123$ is a triangle, and $a_4 + a_5 + b_i + b_j \ge s$, for all $i \neq j$ in $\{1,2,3\}$, by Lemma \ref{clique_term} we must have $\ord_I(h) = \lfloor \deg h / 2 \rfloor$. Thus 
$$a_4 + a_5 + \lfloor |\a|/2 \rfloor - a_4 - a_5 < s.$$
In particular, $|\a| < 2s$, as required. Thus $\reg J = 2s$.
\end{proof}

\subsection*{Acknowledgment} Nguyen Cong Minh is partially supported by the National Foundation for Science and Technology Development (Vietnam).

\end{document}